\def\hat{\widehat}
\def\a{\alpha}
\def\b{\beta}
\def\g{\gamma}
\def\d{\delta}
\def\l{\lambda}
\def\s{\sigma}
\def\ph{\varphi}
\def\A{\operatorname{A}}
\def\D{\operatorname{D}}
\def\E{\operatorname{E}}
\def\Int{{\mathbb Z}}
\def\la{\langle}
\def\ra{\rangle}
\newtheorem{theorem}{Theorem}
\newtheorem{lemma}{Lemma}
\newtheorem{proposition}{Proposition}
\newtheorem{definition}{Definition}
\newtheorem{corollary}{Corollary}
\title[Equations in the adjoint representation]
{Equations determining the orbit of the highest weight vector in the
  adjoint representation}
\author{Alexander Luzgarev}
\address{Saint Petersburg State University}
\date{January 4, 2014}
\thanks{This work was started in the framework of the DAAD program
  ``Mikhail Lomonosov''. Later it was supported by the Russian
  Foundation for Basic Research (project nos. 12-01-31100, 13-01-00429,
  13-01-91150, 13-01-92699, and 14-01-31515), the Golda Meir
  Postdoctoral Fellowship, and by the State Financed research task
  6.38.191.2014.}
\begin{document}

\begin{abstract}
We explicitly construct a set of quadratic equations defining the
highest weight vector orbit for adjoint representations of
Chevalley groups of types $\D_l$, $\E_6$, $\E_7$, and $\E_8$.
The combinatorics of these equations is related to the combinatorics
of embeddings of the root system of type $\A_3$.  We believe that the
constructed equations provide a prominent framework for calculations
with exceptional groups in adjoint representations, which is
particularly interesting for groups of type $\E_8$.
\end{abstract}

\maketitle

\section{Introduction}

The highest weight vector orbit in an irreducible representation of a
Chevalley group over an algebraically closed field is an intersection
of quadrics (cf.~\cite{Lichtenstein}).
We explicitly describe a set of quadratic equaions on this orbit over
an arbitrary commutative ring.
First of all, we have {\it square equations} described by
Nikolai Vavilov in~\cite{Vavilov_numerology_translation} for
microweight
representations as well as for adjoint ones. In some microweight cases
those equations exhaust all equations defining the highest weight
orbit (over an algebraically closed field). In the adjoint cases
square equations are clearly not enough: for example (as Vavilov
pointed out in~\cite{Vavilov_numerology_translation}), they do not
contain
coordinates corresponding to the zero weight.
We cannot get on with an $\A_2$-proof of the structure theorems for
$\E_8$ (cf.~\cite{Vavilov_Luzgarev_E8_translation}) without zero weight coordinates.

The equations on the highest weight vector orbit in the adjoint
representation of a group of type $\A_l$ are well known: they are
called {\it Pl\"ucker equations}. On the other hand, non-simply-laced
root systems are generally a little harder to deal with. That is why
we consider only the remaining simply-laced root systems $\D_l$,
$\E_6$, $\E_7$, and $\E_8$. Moreover, in order to evade some
difficulties relating to triality in $\D_4$ we take $l\geq 5$ in the
$\D_l$ case. In any way, we include $\D_l$ only because our
constructions work verbatim in this case; our main goal is to obtain
equations for exceptional groups.

We construct, in addition to the aforementioned square equations, two
more classes of equations; all of them contain zero-weight
coordinates. The combinatorics of these equations is also intimately
related to the ``numerology of maximal squares'' studied
in~\cite{Vavilov_numerology_translation}
and~\cite{Vavilov_more_numerology_translation}.
The same equations are produced in a more general context (but in
slightly less explicit form) by Victor Petrov, Nikolai
Vavilov, and myself in~\cite{Luzgarev_Petrov_Vavilov}.

The basic calculations that lead to the present paper were performed
by the author in 2007--2008 at the Universit\"at Bielefeld.
The author thanks Anthony Bak for his hospitality and support, and
Nikolai Vavilov for extremely useful discussions.

\section{The equations}

Everywhere in this paper $\Phi=\D_l$, $l\geq 5$ or $\Phi=\E_l$,
$l=6,7,8$.
Let $\{\a_1,\dots,\a_l\}=\Pi\subset\Phi$ be a fundamental system in
$\Phi$ (its elements will be called fundamental roots). Our numbering
of fundamental roots always follows Bourbaki~\cite{Bourbaki46_french}.
For $\a\in\Phi$
we set $\a=\sum_{s=1}^lm_s(\a)\a_s$.

Let $G=G(\Phi,R)$ be the simply connected Chevalley group of type
$\Phi$ over a commutative ring $R$ with $1$.
We work with the adjoint representation of $G(\Phi,R)$, which gives us
the irreducible action of $G(\Phi,R)$ on a free $R$-module $V$ of rank
$l(2l-1)$, $78$, $133$, $248$ for $\Phi=\D_l$, $\E_6$, $\E_7$, $\E_8$
respectively.
By $\Lambda$ we denote the set of weights of our representation
{\it with multiplicities}. More precisely,
$\Lambda=\Lambda^*\sqcup\Delta$, where $\Lambda^*=\Phi$ is the set of
non-zero weights, and $\Delta=\{0_1,\dots,0_l\}$ is the set of zero
weights. We fix an admissible base $e^\lambda$, $\lambda\in\Lambda$ in
$V$. Hence we have the vectors $e^\a$ for $\a\in\Phi$ and
$\hat{e}^i = e^{0_i}$ for $i=1,\dots,l$.
Then a vector $v\in V$ can be uniquely written as
$v = \sum_{\lambda\in\Lambda} v_\lambda e^\lambda =
\sum_{\alpha\in\Phi} v_\alpha e^\alpha +
\sum_{i=1}^l \hat{v}_i\hat{e}^i$. We will often simply write
$v = (v_\lambda)$.

The root system $\Phi$ is a subset of a Euclidean space $E$ with the
scalar product denoted by $(\cdot,\cdot)$.
We will also use a bilinear product defined by
$\la\a,\b\ra = 2(\a,\b)/(\b,\b)$ for $\a,\b\in E$ (for
$\a,\b\in\Phi$ we get the {\it Cartan numbers}). Note that our root
system $\Phi$ is simply-laced, which means that all roots have length
$1$; therefore $\la\a,\b\ra = 2(\a,\b)$ for $\a,\b\in\Phi$.
We denote by $\angle(\a,\b)$ the angle between $\a,\b\in E$.
Note that for $\a,\b\in\Phi$ the scalar product $(\a,\b)$ is $0, 1/2,
-1/2, 1, -1$ if $\a\perp\b$, $\a-\b\in\Phi$, $\a+\b\in\Phi$, $\a=\b$,
$\a=-\b$ respectively.

The structure constants $N_{\alpha,\beta}$, $\alpha,\beta\in\Phi$ of
the simple complex Lie algebra of type $\Phi$ are described in detail
in~\cite[\S~1]{Vavilov_znaki_translation}. We often use the
identities for structure constants summarized there without any
explicit reference. Note that in our case always $N_{\a,\b}=0$ or $\pm 1$.

\par\smallskip
$\bullet$ {\bf The $\pi/2$-equations.}
Suppose $\a,\b\in\Phi$ and
$\angle(\a,\b)=\pi/2$.
Let us look at all other (unordered) pairs of roots with the same sum:
$$
S_{\pi/2}(\a,\b)=\{\{\g,\d\}\mid \g+\d=\a+\b,\{\g,\d\}\neq\{\a,\b\}\}.
$$
Consider the foolowing equation on a vector
$v=(v_\l)_{\l\in\Lambda}\in V$:
\begin{equation}\label{pi/2-equation}
v_\a v_\b=\sum_{\{\g,\d\}\in S_{\pi/2}(\a,\b)}N_{\a,-\g}N_{\b,-\d}v_\g v_\d.
\end{equation}
We will call it {\it the $\pi/2$-equation} corresponding to the pair
$\{\a,\b\}$. First of all, we need to prove that the right hand side
makes sense: we could swap $\gamma$ with $\delta$ and get a
different-looking coefficient.
But it follows from the identity $(C5)$ in~\cite{Vavilov_znaki_translation} that
$N_{\a,-\g}N_{\b,-\d}=N_{\a,-\d}N_{\b,-\g}$.
Next, note that $(\a,\g)+(\a,\d)=(\a,\g+\d)=(\a,\a+\b)=(\a,\a)=1$,
while $\a\neq\g$, $\a\neq\d$. 
Therefore $(\a,\g)=(\a,\d)=1/2$. It follows that
$\angle(\a,\g)=\angle(\a,\d)=\pi/3$.

For the rest of the paper, put $k=l,4,5,7$ for
$\Phi=\D_l,\E_6,\E_7,\E_8$, respectively.
In order to write the $\pi/2$-equation in a more symmetric form,
recall a definition from~\cite{Vavilov_numerology_translation}.
\begin{definition}\label{def:maximal_square}
A set of roots $\{\b_i\}$, $i=1,\dots,k,-k,\dots,-1$
such that $\angle(\b_i,\b_{-i})=\pi/2$ for every $i=1,\dots,k$, and
$\angle(\b_i,\b_j)=\pi/3$ for $i\neq\pm j$,
is called a {\bf maximal square}.
\end{definition}
For a maximal square $\{\b_i\}$ the sum $\b_i+\b_{-i}$ does not depend
on $i$.
Therefore the set of roots contained in the pairs from
$S_{\pi/2}(\a,\b)$, together with the roots $\a$ and $\b$,
is a maximal square (this was proved in~\cite[Theorem
1]{Vavilov_numerology_translation}).
We shall prove shortly that our $\pi/2$-equation is uniquely
determined by this maximal square, and does not depend on the choice
of an orthogonal pair of roots $\{\a,\b\}$.
Let us fix an index $j=1,\dots,-1$.
If we put $\b_1=\a$, $\b_{-1}=\b$, and
$S_{\pi/2}(\a,\b) = \{\{\b_i,\b_{-i}\}\mid i=2,\dots,k\}$,
the $\pi/2$-equation can be rewritten as
$$
v_{\b_1}v_{\b_{-1}} =
\sum_{i\geq 2}N_{\b_1,-\b_i}N_{\b_{-1},-\b_{-i}}v_{\b_i}v_{\b_{-i}}.
$$
The sign column $c(j)\in(\Int/2\Int)^{2k}$ is defined as follows.
$$
c(j)_i=
\begin{cases} 1,&\text{if $i=\pm j$},\\
-N_{\b_j,-\b_i}N_{\b_{-j},-\b_{-i}},&\text{if $i\neq\pm j$}.\\
\end{cases}
$$
Another equivalent form of the $\pi/2$-equation is
$$
\sum_{i=1}^k c(1)_iv_{\b_i}v_{\b_{-i}}=0.
$$

The following lemma says that if we take another orthogonal pair in
$S_{\pi/2}(\a,\b)$ instead of $\{\a,\b\}$, we will get the same equation.

\begin{lemma}
For any $j,h=1,\dots,-1$ we have
$$
c(h)-c(h)_jc(j)=0.
$$
\end{lemma}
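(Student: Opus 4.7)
The identity $c(h) - c(h)_j\,c(j) = 0$ is coordinate-wise in $(\Int/2\Int)^{2k}$, so the strategy is to verify $c(h)_i = c(h)_j\,c(j)_i$ for each $i\in\{\pm 1,\ldots,\pm k\}$ individually, by case analysis on how the indices $h$, $j$, $i$ relate up to sign.

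The easy cases are those in which two of $h$, $j$, $i$ coincide up to sign. If $j=h$, the claim is a tautology since $c(h)_h=1$. The cases $i=\pm h$, $i=\pm j$, and $j=-h$ each reduce, after one step, to a single application of identity $(C5)$ --- applicable here because the three orthogonal pairs $(\b_h,\b_{-h})$, $(\b_j,\b_{-j})$, $(\b_i,\b_{-i})$ all share the same sum in the maximal square --- combined with the simply-laced symmetry $N_{\a,-\b}=N_{\b,-\a}$, which follows from $N_{\a,\b}=-N_{\b,\a}$ together with $N_{-\a,-\b}=-N_{\a,\b}$.

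The substantive case is the generic one, where $|h|$, $|j|$, $|i|$ are pairwise distinct. There the desired identity unpacks to
\begin{equation*}
-N_{\b_h,-\b_i}\,N_{\b_{-h},-\b_{-i}}
\;=\;
N_{\b_h,-\b_j}\,N_{\b_{-h},-\b_{-j}}\,N_{\b_j,-\b_i}\,N_{\b_{-j},-\b_{-i}},
\end{equation*}
a relation of ``four versus two'' structure constants over the six roots $\pm\b_h, \pm\b_j, \pm\b_i$. I propose to derive this from a Jacobi identity applied to the triple $(e_{\b_h},\,e_{-\b_j},\,e_{\b_j-\b_i})$: the differences $\b_h-\b_j$ and $\b_j-\b_i$ are both roots (angles $\pi/3$ in the maximal square), while $\b_h+\b_j-\b_i$ has square norm $2$ and so is not a root, which annihilates one of the three Jacobi summands. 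The resulting two-term relation, together with its counterpart obtained by applying Jacobi to $(e_{\b_{-h}},\,e_{-\b_{-j}},\,e_{\b_{-j}-\b_{-i}})$ and a cleanup via $(C5)$ on the common sum $\b_h+\b_{-h}=\b_i+\b_{-i}$, should collapse to the claimed identity.

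The main obstacle is exactly this generic case. Its ``four-vs-two'' character is essentially trilinear in the structure constants, so no single application of the bilinear identity $(C5)$ can close it. The difficulty is bookkeeping: selecting the right Jacobi triples and tracking signs so that the auxiliary structure constants involving the intermediate roots $\b_h-\b_j$ and $\b_j-\b_i$ cancel against their ``opposite'' counterparts. Everything else amounts to routine case-checking.
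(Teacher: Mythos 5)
Your proposal is essentially correct, but it takes a genuinely different route from the paper: the paper proves the lemma in one line by citing Theorem 3 of Vavilov's ``numerology of maximal squares'' paper, which is exactly the statement that any two sign columns of a maximal square agree up to the sign $c(h)_j$, whereas you reprove this from scratch out of the structure-constant identities. Your easy cases are fine as written, and your choice of Jacobi triple in the generic case does work; since you left it at ``should collapse'', here is the endgame. Writing $a=\b_h$, $b=\b_j$, $c=\b_i$ and primes for the opposite indices, your degenerate Jacobi identity (the term with $a+b-c$, of squared length $2$, drops out) gives, after the cyclic identity and your symmetry $N_{\a,-\b}=N_{\b,-\a}$,
$$
N_{a-b,\,b-c}\;=\;N_{a,-b}\,N_{b,-c}\,N_{a,-c},
\qquad
N_{a'-b',\,b'-c'}\;=\;N_{a',-b'}\,N_{b',-c'}\,N_{a',-c'}.
$$
The final cleanup is, however, not an application of $(C5)$: what closes the argument is that $a'-b'=-(a-b)$ and $b'-c'=-(b-c)$ because opposite pairs of the square have the same sum, so by $N_{-\a,-\b}=-N_{\a,\b}$ the two left-hand sides are negatives of each other (and nonzero, since $a-c\in\Phi$); multiplying the two relations gives $-1$ on the left, and multiplying through by $N_{a,-c}N_{a',-c'}$ yields precisely your ``four versus two'' identity $-N_{a,-c}N_{a',-c'}=N_{a,-b}N_{a',-b'}N_{b,-c}N_{b',-c'}$. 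So the bookkeeping you worried about does come out, at the cost of redoing (a special case of) Vavilov's computation; what your route buys is a self-contained proof that does not lean on the cited theorem and that locates the whole content of the lemma in one degenerate Jacobi identity inside the $\A_3$-subsystem spanned by the two orthogonal pairs $\{\b_h,\b_{-h}\}$, $\{\b_j,\b_{-j}\}$ (or $\{\b_j,\b_{-j}\}$, $\{\b_i,\b_{-i}\}$), which fits nicely with the role $\A_3$-subsystems play elsewhere in the paper.
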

\begin{proof}
Immediately follows from~\cite[Theorem 3]{Vavilov_numerology_translation}.
\end{proof}

\par\smallskip
$\bullet$ {\bf The $2\pi/3$-equations.}

Suppose again that $\a,\b\in\Phi$ and $\angle(\a,\b)=\pi/2$.
Consider all pairs of roots $\{\g,\d\}$ such that $\g+\d=\a$
and $\g,\d$ are not orthogonal to $\b$.
Note that if $\g\perp\b$ and $\g+\d=\a$, then $(\d,\b) =
(\a-\g,\b)=0$, so $\d\perp\b$.
Also, $0=(\a,\b)=(\g+\d,\b)=(\g,\b)+(\d,\b)$. Therefore
for such a pair $\{\g,\d\}$ one of the angles $\angle(\g,\b)$,
$\angle(\d,\b)$ is $2\pi/3$, while the other is $\pi/3$.
Put
$$
S_{2\pi/3}(\a,\b)=\{\{\gamma,\delta\}\mid \gamma+\delta=\a,(\g,\b)\neq 0\}.
$$

Consider the following equation on a vector
$v=(v_\l)_{\l\in\Lambda}\in V$:
\begin{equation}\label{2pi/3-equation}
v_\a\cdot\sum_{s=1}^l\langle\b,\a_s\rangle \hat v_{s}
=-\sum_{\substack{\{\g,\d\}\in S_{2\pi/3}(\a,\b),\\\angle(\g,\b)=\pi/3}}N_{\g,\d}v_\g v_\d.
\end{equation}
We will call it {\it the $2\pi/3$-equation} corresponding to the pair
$(\a,\b)$. 

The pairs in $S_{2\pi/3}(\a,\b)$ are related to the embeddings of root
systems $\A_3\subset\Phi$.
In order to see that, consider a pair $\{\g,\d\}\in S_{2\pi/3}(\a,\b)$.
We may assume that $(\g,\b)=1/2$, $(\d,\b)=-1/2$.
Then the roots $\d,\g,\b-\g$ form a fundamental system of a root
subsystem $\Psi\subseteq\Phi$ of type $\A_3$.
We can write the roots $\a,\b$ in the Dynkin notation for this
fundamental system as follows: $\a=110$, $\b=011$.
Note that $\Psi$ contains $\g',\d'$ for exactly one more pair
$\{\g',\d'\}\in S_{2\pi/3}(\a,\b)$,
namely, the pair $\{\g',\d'\}= \{111,-001\}$. In other words,
$\g'=\d+\b$, $\d'=\g-\b$.
The pairs $\{\g,\d\}$ and $\{\g',\d'\}$ are said to be{\it conjugate}.

Note that $|S_{2\pi/3}(\a,\b)|=2(l-1),6,8,12$ for
$\Phi=\D_l,\E_6,\E_7,\E_8$, respectively.
We see that the number of conjugate pairs in $S_{2\pi/3}(\a,\b)$
is one less than the number of pairs of orthogonal roots in a maximal
square.
This is not a coincidence:
if we fix an orthogonal pair $(\a,\b)$ in a maximal square and take
any of the remaining pairs, together they span a root subsystem of
type $\A_3$. There are exactly $k-1$ of these subsystems, and each
contains exactly two of conjugate pairs from $S_{2\pi/3}(\a,\b)$.

We get the following equivalent description of $S_{2\pi/3}(\a,\b)$:
\begin{lemma}\label{2pi/3-equiv}
Suppose that $\a,\b\in\Phi$, $\a\perp\b$. Let
$\Omega=\{\b_1,\dots,\b_{-1}\}$ be a maximal square such that
$\b_1=\a$, $\b_{-1}=\b$, and $\b_i\perp\b_{-i}$ for every $i$.
Then
$$
S_{2\pi/3}(\a,\b)=\{\{\b_1-\b_i,\b_i\}\mid i=2,\dots,-2\},
$$
and the $2\pi/3$-equation corresponding to the pair $(\a,\b)$
can be rewritten as follows:
$$
v_{\b_1}\cdot\sum_{s=1}^l\langle\b_{-1},\a_s\rangle \hat v_{s} =\sum_{i\neq\pm 1}N_{\b_1,-\b_i}v_{\b_1-\b_i}v_{\b_i}.
$$
\end{lemma}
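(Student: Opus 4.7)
My plan is to verify the two assertions of the lemma in sequence: first the set-theoretic description of $S_{2\pi/3}(\a,\b)$, then the reformulation of the equation.

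For the set equality I would exhibit each pair $\{\b_1-\b_i,\b_i\}$ with $i\neq\pm 1$ as an element of $S_{2\pi/3}(\a,\b)$ and then count. From the maximal-square property, $\angle(\b_1,\b_i)=\pi/3$ gives $(\b_1,\b_i)=1/2$, so $\b_1-\b_i\in\Phi$; the sum is $\b_1=\a$; and $\angle(\b_i,\b_{-1})=\pi/3$ (valid because $i\neq\pm 1$) yields $(\b_i,\b)=1/2\neq 0$, placing the pair in $S_{2\pi/3}(\a,\b)$. The $2(k-1)$ pairs so obtained are pairwise distinct, since an equality of pairs would force either $\b_i=\b_j$ (hence $i=j$) or $\b_i+\b_j=\b_1$; the latter is impossible because $(\b_i+\b_j,\b_i+\b_j)\in\{2,3\}$ whereas $(\b_1,\b_1)=1$. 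Since the paper already records $|S_{2\pi/3}(\a,\b)|=2(k-1)$ in each of the four cases, the two sets must coincide.

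For the reformulation, I would identify within each pair the root $\gamma$ satisfying $\angle(\gamma,\b)=\pi/3$. The scalar products $(\b_i,\b_{-1})=1/2$ and $(\b_1-\b_i,\b_{-1})=-1/2$ show that this role is played by $\b_i$, so after substitution the right-hand side of the $2\pi/3$-equation becomes
$$
-\sum_{i\neq\pm 1}N_{\b_i,\b_1-\b_i}\,v_{\b_i}v_{\b_1-\b_i}.
$$
It remains to verify the numerical identity $-N_{\b_i,\b_1-\b_i}=N_{\b_1,-\b_i}$ for each $i\neq\pm 1$, which I would extract from the standard Chevalley identities recorded in~\cite{Vavilov_znaki_translation}: cyclic symmetry applied to the null triple $\b_i+(-\b_1)+(\b_1-\b_i)=0$ combined with antisymmetry $N_{\mu,\nu}=-N_{\nu,\mu}$ gives $N_{\b_i,-\b_1}=-N_{\b_i,\b_1-\b_i}$, while the sign rule $N_{-\mu,-\nu}=-N_{\mu,\nu}$ together with antisymmetry yields $N_{\b_1,-\b_i}=N_{\b_i,-\b_1}$; chaining the two equalities produces the claim.

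The step most likely to trip me up is this final sign bookkeeping, since conventions for Chevalley structure constants differ between sources and one must be careful to invoke only the identities explicitly listed in~\cite{Vavilov_znaki_translation}. By contrast, the geometric set-equality argument is essentially immediate once the numerology $|S_{2\pi/3}(\a,\b)|=2(k-1)$ is taken as given.
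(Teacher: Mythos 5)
Your proposal is essentially correct, but the first half takes a different route from the paper, and that route leans on an input the paper never actually proves. For the set equality you establish only the forward inclusion plus distinctness of the $2(k-1)$ pairs $\{\b_1-\b_i,\b_i\}$, and then close the argument by citing the cardinality $|S_{2\pi/3}(\a,\b)|=2(l-1),6,8,12$. That count is merely asserted in the paper (as a passing ``Note''), and its most natural justification is precisely the bijection this lemma establishes, so within the paper's own logic your argument is not self-contained and borders on circular; it is rescued only if one accepts the count as externally known numerology. The paper instead proves the reverse inclusion directly, with no counting: given $\{\g,\d\}\in S_{2\pi/3}(\a,\b)$ with $\angle(\g,\b)=2\pi/3$, the roots $\a-\g$ and $\g+\b$ are orthogonal and sum to $\a+\b$, hence both lie in $\Omega$, forcing $\d=\a-\g=\b_i$ for some $i$; this two-line observation both avoids the cardinality and yields it as a byproduct, which is the cleaner order of deduction. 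Your second half — identifying $\b_i$ as the member of each pair at angle $\pi/3$ to $\b$ and reducing the coefficient via $-N_{\b_i,\b_1-\b_i}=N_{\b_i,-\b_1}=N_{\b_1,-\b_i}$ using the cyclic identity on $\b_i+(-\b_1)+(\b_1-\b_i)=0$, antisymmetry, and $N_{-\mu,-\nu}=-N_{\mu,\nu}$ — is correct and is the same sign bookkeeping the paper performs via $(C4)$ and $(C1)$; your explicit check that no pair is indexed twice also correctly guarantees the rewritten sum over $i\neq\pm1$ has exactly one term per pair. If you replace the counting step by the paper's direct reverse-inclusion argument, your proof becomes fully self-contained.
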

\begin{proof}
Note that $(\b_1-\b_i)+\b_i=\b_1=\a$ and $(\b_i,\b)=(\b_i,\b_{-1})\neq
0$ for $i=2,\dots,-2$.
This means that all pairs $\{\b_1-\b_i,\b_i\}$ are in
$S_{2\pi/3}(\a,\b)$.
In order to prove the reverse inclusion, consider a pair $\{\g,\d\}\in
S_{2\pi/3}(\a,\b)$.
We may assume that $\angle(\g,\b)=2\pi/3$.
Then the roots $\a-\g$, $\g+\b$ are orthogonal, and their sum is
$\a+\b$; it follows that $\a-\g,\g+\b\in\Omega$, so that
$\d=\a-\g=\b_i$ for some $i$. It remains to note that
$N_{\b_1-\b_i,\b_i}=N_{\b_i,-\b_1}=N_{\b_1,-\b_i}$ by the identities
$(C4)$ and $(C1)$ from~\cite{Vavilov_znaki_translation}
\end{proof}

\par\smallskip
$\bullet$ {\bf The $\pi$-equations.}
Suppose that $\a,\b\in\Phi$ and $\angle(\a,\b)=\pi/2$.
Consider all pairs of roots $\{\g,\d\}$ such that
$\g=-\d$ and $\g,\d$ are not orthogonal to $\a$ and $\b$.
There are two possibilities: the first is $(\gamma,\a)=(\gamma,\b)$,
and then $(\d,\a)=(\d,\b)$. We may assume that
$(\g,\a)=(\g,\b)=2\pi/3$.
Put
$$
S_{\pi}(\a,\b)=\{(\g,\d)\mid \g+\d=0,\angle(\g,\a)=\angle(\g,\b)=2\pi/3\}.
$$
The second possibility is that one of the angles $\angle(\g,\a)$,
$\angle(\d,\a)$ is $2\pi/3$. We may assume that
$\angle(\g,\a)=2\pi/3$, and then $\angle(\g,\b)=\pi/3$,
$\angle(\d,\a)=\pi/3$, $\angle(\d,\b)=2\pi/3$.
Put
$$
S'_{\pi}(\a,\b)=\{(\g,\d)\mid \g+\d=0,\angle(\g,\a)=\angle(\d,\b)=2\pi/3\}.
$$

Consider the following equation on a vector
$v=(v_\l)_{\l\in\Lambda}\in V$:
\begin{equation}\label{pi-equation}
\sum_{s=1}^l\langle\a,\a_s\rangle \hat v_{s}\cdot\sum_{s=1}^l\langle\b,\a_s\rangle \hat{v}_{s} =
\sum_{(\g,\d)\in S'_{\pi}(\a,\b)}v_\g v_\d-\sum_{(\g,\d)\in S_{\pi}(\a,\b)}v_\g v_\d.
\end{equation}
We will call it {\it the $\pi$-equation} corresponding to the pair
$(\a,\b)$. 

Note that $|S_{\pi}(\a,\b)|=|S'_{\pi}(\a,\b)|=2(l-1),6,8,12$ for
$\Phi=\D_l,\E_6,\E_7,\E_8$, respectively.
As in the previous case, we can construct a maximal square
corresponding to $S_\pi(\a,\b)$.
For any pair $(\g,\d)\in S_\pi(\a,\b)$ we have
$\g+\a\in\Phi$ и $(\g+\a,\b)=(\g,\b)=-1/2$,
therefore $\g+\a+\b\in\Phi$. Moreover,
$(\g+\a+\b,\a)=(\g+\a+\b,\b)=(-\g,\a)=(-\g,\b)=1/2$
and $-\g+(\g+\a+\b)=\a+\b$.
This means that the roots $\{-\g\mid(\g,\d)\in S_\pi(\a,\b)\}$
together with $\a$, $\b$ form a maximal square.
It is easy to see that the roots $\{\g+\a,\b+\d\mid(\g,\d)\in
S'_\pi(\a,\b)\}$ together with
$\a$, $\b$ form (the same) maximal square.

As in the previous case, the constructed sets $S_\pi(\a,\b)$ and
$S'_\pi(\a,\b)$ are related to embeddings $\A_3\subset\Phi$:
if $(\g,\d)\in S_\pi(\a,\b)$, the roots $\a,\g,\b$ form a fundamental
system of a root subsystem $\Psi\subseteq\Phi$ of type $\A_3$.
We can write $(\g,\d)=(010,-010)$ in Dynkin notation with respect to
this fundamental system.
Moreover, $\Psi$ contains another pair of roots from $S_\pi(\a,\b)$,
namely, $(-111,111)$.
On the other hand, the pairs $(-110,110)$ and $(011,-011)$ are in
$S'(\a,\b)$.
The analogue of Lemma~\ref{2pi/3-equiv} holds in this situation:
\begin{lemma}\label{pi-equiv}
Suppose that $\a,\b\in\Phi$, $\a\perp\b$. Let
$\Omega=\{\b_1,\dots,\b_{-1}\}$ be a maximal square such that
$\b_1=\a$, $\b_{-1}=\b$, and $\b_i\perp\b_{-i}$ for every $i$.
Then
\begin{align*}
S_\pi(\a,\b)&=\{(-\b_i,\b_i)\mid i=2,\dots,-2\},\\
S'_\pi(\a,\b)&=\{(\b_i-\b_1,\b_1-\b_i)\mid i=2,\dots,-2\},
\end{align*}
and the $\pi$-equation corresponding to $(\a,\b)$ can be rewritten is
follows.
$$
\sum_{s=1}^l\langle\b_1,\a_s\rangle \hat v_{s}\cdot\sum_{s=1}^l\langle\b_{-1},\a_s\rangle \hat v_{s} =
\sum_{i\neq\pm 1}(v_{\b_1-\b_i}v_{\b_i-\b_1}-v_{-\b_i}v_{\b_i}).
$$
\end{lemma}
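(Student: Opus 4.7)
The plan is to parallel Lemma~\ref{2pi/3-equiv}: first establish the two set equalities separately, and then derive the rewritten equation by direct substitution into~\eqref{pi-equation}.

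For the description of $S_\pi(\a,\b)$, the forward inclusion is essentially provided by the discussion preceding the lemma: given $(\g,\d)\in S_\pi(\a,\b)$, the roots $-\g$ and $\g+\a+\b$ are orthogonal, lie in $\Phi$, and sum to $\a+\b$, so they form one of the orthogonal pairs $\{\b_i,\b_{-i}\}$ of $\Omega$, forcing $\g=-\b_i$ for some $i\neq\pm 1$ (the restriction coming from $\g$ not being orthogonal to $\a$ or $\b$). For the reverse inclusion I verify the definition: for $i\neq\pm 1$, one has $(-\b_i)+\b_i=0$, and the maximal-square relation $\angle(\b_i,\b_{\pm 1})=\pi/3$ yields $\angle(-\b_i,\b_{\pm 1})=2\pi/3$, placing $(-\b_i,\b_i)$ in $S_\pi(\a,\b)$. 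Injectivity of $i\mapsto(-\b_i,\b_i)$ on $\{\pm 2,\dots,\pm k\}$ together with the cardinality count $|S_\pi(\a,\b)|=2(k-1)$ already recorded in the text confirms the claimed bijection.

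The description of $S'_\pi(\a,\b)$ proceeds analogously, using the second observation made just before the lemma: for $(\g,\d)\in S'_\pi(\a,\b)$, the roots $\g+\a$ and $\b+\d=\b-\g$ are orthogonal and sum to $\a+\b$, so they belong to $\Omega$; writing $\g+\a=\b_i$ gives $\g=\b_i-\b_1$ and $\d=\b_1-\b_i$ with $i\neq\pm 1$. For the converse, since $(\b_i,\b_1)=1/2$ for $i\neq\pm 1$, the difference $\b_i-\b_1$ is a root, and the inner products $(\b_i-\b_1,\b_1)=-1/2$, $(\b_i-\b_1,\b_{-1})=1/2$ translate into $\angle(\b_i-\b_1,\b_1)=\angle(\b_1-\b_i,\b_{-1})=2\pi/3$, which is precisely the condition defining $S'_\pi$.

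With both set equalities in hand, the reformulated $\pi$-equation follows by term-by-term substitution into~\eqref{pi-equation}: unlike the $2\pi/3$-case, the coefficients on the right-hand side are $\pm 1$ with no structure constants attached, so no appeal to identities like $(C1)$ or $(C4)$ is required. The only real subtlety, which I expect to be the main (if minor) obstacle, is the bookkeeping with ordered versus unordered representatives: the definitions of $S_\pi$ and $S'_\pi$ pin down which element is called $\g$ via an angle condition, and one has to confirm that the canonical orientations $(-\b_i,\b_i)$ and $(\b_i-\b_1,\b_1-\b_i)$ induced by the maximal square are exactly the ones selected by those conditions. This matching is immediate from the angle computations above, so beyond this sign-convention check the argument is a routine translation of the maximal-square angle data.
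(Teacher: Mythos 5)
Your argument is correct, and it is exactly the intended one: the paper states Lemma~\ref{pi-equiv} without proof as ``the analogue of Lemma~\ref{2pi/3-equiv}'', and your inclusions (reducing both $S_\pi$ and $S'_\pi$ to orthogonal pairs of roots summing to $\a+\b$, hence lying in $\Omega$, plus the direct angle check for the converse) mirror the paper's proof of that lemma and the observations in the paragraph preceding the statement. The cardinality count you invoke is not even needed, since you prove both inclusions directly; the rest, including the absence of structure constants in the substitution step, matches the paper's setup.
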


\par\smallskip
To reiterate, we get one $\pi/2$-equation for every maximal square,
one $2\pi/3$-equation and one $\pi$-equation for every maximal square
with a chosen pair of orthogonal roots in it.

\section{Preliminary lemmas}

We encountered embeddings $\A_3\subseteq\Phi$; we will use the fact
that every such embedding can be expanded to an embedding
$\D_4\subseteq\Phi$.
\begin{lemma}\label{lemma: A3 in D4}
Recall that $\Phi=\E_l$ or $\D_l$ ($l\geq 5$).
Every subsystem $\Psi\subseteq\Phi$ of type $\A_3$ can be embedded
into a subsystem of type $\D_4$. To be precise, if
$\a,\b,\g\in\Phi$ are roots such that $\a\perp\g$,
$\angle(\a,\b)=\angle(\b,\g)=2\pi/3$, then
there is a root $\d\in\Phi$ such that $\d\perp\a$, $\d\perp\g$, and
$\angle(\d,\b)=2\pi/3$.
\end{lemma}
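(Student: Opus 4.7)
The plan is to rephrase the task as a non-annihilation condition on a sub-root-system, and then to verify it by reducing the exceptional cases to $\D_l$.

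\textbf{Reformulation.}
Since $\Phi$ is simply-laced, any two roots $\mu,\nu\in\Phi$ satisfy $(\mu,\nu)\in\{0,\pm 1/2,\pm 1\}$, with $(\mu,\nu)=\pm 1$ only for $\mu=\pm\nu$.  If I produce any root $\d\in\Phi$ with $\d\perp\a$, $\d\perp\g$, and $(\d,\b)\neq 0$, then $\d=\pm\b$ is impossible (since $\b\not\perp\a$), so $(\d,\b)=\pm 1/2$; replacing $\d$ by $-\d$ if necessary yields the required root.  The statement thus reduces to showing that
$$
\Phi''\mathrel{:=}\{\mu\in\Phi\mid\mu\perp\a,\ \mu\perp\g\}
$$
is not contained in $\b^\perp$.

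\textbf{Type $\D_l$, $l\geq 5$.}
In the standard realisation with roots $(\pm e_i\pm e_j)/\sqrt{2}$, the orthogonal pair $\{\a,\g\}$ falls, up to $W(\D_l)$, into two types: (i) $\a,\g$ share their two indices, say $\a=(e_1-e_2)/\sqrt{2}$, $\g=(e_1+e_2)/\sqrt{2}$; or (ii) $\a,\g$ involve disjoint indices, say $\a=(e_1-e_2)/\sqrt{2}$, $\g=(e_3-e_4)/\sqrt{2}$.  In case (i), $\Phi''$ is a full $\D_{l-2}$-subsystem of rank $l-2\geq 3$ spanning $\langle e_3,\dots,e_l\rangle$; the conditions on $\b$ force it to have the form $(-e_1\pm e_k)/\sqrt 2$ with $k\geq 3$, and one may take $\d=\pm(e_k+e_{k'})/\sqrt 2$ for any $k'\neq 1,2,k$.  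In case (ii), the conditions on $\b$ pin it down to one of the four roots $(-e_1-e_3)/\sqrt 2$, $(-e_1+e_4)/\sqrt 2$, $(e_2-e_3)/\sqrt 2$, $(e_2+e_4)/\sqrt 2$; in each instance one of $\pm(e_1+e_2)/\sqrt 2$ or $\pm(e_3+e_4)/\sqrt 2$ lies in $\Phi''$ and has inner product $\pm 1/2$ with~$\b$.

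\textbf{Type $\E_l$, $l=6,7,8$.}
I would show that every $\A_3$-subsystem $\Psi$ of $\E_l$ is contained in a $\D_k$-subsystem of $\E_l$ with $k\geq 5$, using the classical embeddings $\D_5\subset\E_6$, $\D_6\subset\E_7$, $\D_8\subset\E_8$ together with the transitivity of $W(\E_l)$ on $\A_3$-subsystems.  Then the $\D_l$ case applied inside this $\D_k$-subsystem produces $\d$, which automatically lies in $\Phi$.

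\textbf{Main obstacle.}
The tightest point is case (ii) of type $\D_5$, where $\Phi''$ has rank only~$2$; the key observation is that the coupled constraints $(\a,\b)=(\g,\b)=-1/2$ force $\b$ to lie in the sum of $\langle\a,\g\rangle$ and the span of $\Phi''$, so its projection onto $\langle\a,\g\rangle^\perp$ is picked up by a root of $\Phi''$.  For the exceptional types the nontrivial step is Weyl-transitivity on $\A_3$-subsystems, a finite check that can be done in coordinates or by appealing to the classification of sub-root-systems of $\E_l$.
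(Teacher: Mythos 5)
Your argument is correct, but it takes a route that differs from the paper's in an essential part. The paper handles both families by quoting Carter's classification of Weyl-group orbits of $\A_3$-subsystems: in $\E_l$ there is one orbit, in $\D_l$ ($l\geq 5$) there are two, so it suffices to exhibit $\d$ for explicit representatives among the simple roots ($\a=\a_2$, $\b=\a_4$, $\g=\a_3$, $\d=\a_5$ in $\E_l$, and two analogous choices in $\D_l$). You instead prove the $\D_l$ case by a self-contained coordinate computation: after the useful reformulation that it is enough to find $\d\perp\a,\g$ with $(\d,\b)\neq 0$ (then $\d\neq\pm\b$, so $(\d,\b)=\pm 1/2$ and a sign flip finishes), you classify the orthogonal pair $\{\a,\g\}$ up to $W(\D_l)$ into the shared-index and disjoint-index types, enumerate the possible $\b$, and name $\d$ explicitly; this classification of ordered orthogonal pairs into exactly two orbits does use $l\geq 5$ (a spare index to make single sign changes even), which is available here, and your treatment of the tight $\D_5$ disjoint-index case, where the orthogonal complement system has rank $2$, checks out. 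For $\E_l$ you then invoke the same transitivity of $W(\E_l)$ on $\A_3$-subsystems that the paper cites, but use it only to place the given $\A_3$ inside a $\D_k$-subsystem with $k\geq 5$ and reuse the $\D$ case. The trade-off: the paper's proof is shorter and gives simple-root witnesses, but leans on Carter's tables for both types and implicitly on the fact that all triples with the prescribed angles inside a fixed $\A_3$ are equivalent; your route needs Carter (or an equivalent finite check) only for the $\E_l$ transitivity, is elementary for $\D_l$, and, since it only uses containment of the subsystem in a $\D_k$, it avoids the configuration-within-the-subsystem issue altogether. The $\E_l$ step is only sketched (transitivity is deferred to a classification), but that is the same external input the paper relies on, so there is no genuine gap.
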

\begin{proof}
In the case $\Phi=\E_l$ all subsystems of type $\A_3$ in $\Phi$ lie in
one orbit with respect to the action of the Weyl group $W(\E_l)$. This
follows, for example, from the tables in Carter's
paper~\cite{Carter_conjugacy}.
Therefore it remains to show the statement for a single subsystem of
type $\A_3$: for example, we may assume that $\a=\a_2$, $\b=\a_4$,
$\g=\a_3$ and take $\d=\a_5$.
In the case $\Phi=\D_l$ there are two orbits of subsystems of type
$\A_3$ with respect to the action of the Weyl group
$W(\D_l)$. This immediately follows from the computations
in~\cite[\S~9]{Carter_conjugacy}.
For one of the orbits we may assume that $\a=\a_{l-1}$, $\b=\a_{l-2}$,
$\g=\a_l$, and take $\d=\a_{l-3}$; for the other orbit we may assume
that $\a=\a_{l-3}$, $\b=\a_{l-2}$, $\g=\a_{l-1}$, and take $\d=\a_l$.
\end{proof}

Now we describe the possible relative positions of a root
$\rho\in\Phi$ and a maximal square $\Omega=\{\b_1,\dots,\b_{-1}\}$.
\begin{lemma}\label{lem:root_and_square_set}
Let $\Omega=\{\b_1,\dots,\b_{-1}\}$ be a maximal square, and let
$\rho\in\Phi$ be a root.
Exactly one of the following holds:
\par\smallskip
$(1)$ There exists $i$ such that $\rho=\b_i$,
$\angle(\rho,\b_{-i})=\pi/2$, and
$\angle(\rho,\b_j)=\pi/3$ for $j\neq\pm i$.
\par\smallskip
$(2)$ There exists $i$ such that $\rho=-\b_i$,
$\angle(\rho,\b_{-i})=\pi/2$, and
$\angle(\rho,\b_j)=2\pi/3$ for $j\neq\pm i$.
\par\smallskip
$(3)$ There exists $i$ such that $\rho\perp\b_i$ and
$\rho\perp\b_{-i}$; for every $j=1,\dots,-1$ either $\rho\perp\b_j$,
$\rho\perp\b_{-j}$, or one of the angles $\angle(\rho,\b_j)$,
$\angle(\rho,\b_{-j})$ equals $\pi/3$, while the other equals $2\pi/3$.
\par\smallskip
$(4)$ For every $i$ one of the angles $\angle(\rho,\b_i)$, $\angle(\rho,\b_{-i})$ equals $\pi/2$,
while the other equals $\pi/3$.
\par\smallskip
$(5)$ For every $i$ one of the angles $\angle(\rho,\b_i)$, $\angle(\rho,\b_{-i})$ equals $\pi/2$,
while the other equals $2\pi/3$.
\par\smallskip
Moreover, $(\rho,\b_i+\b_{-i})$ equals $1$, $-1$, $0$, $1/2$, $-1/2$
in cases $(1)$, $(2)$, $(3)$, $(4)$, $(5)$, respectively.
\end{lemma}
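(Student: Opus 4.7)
First I would set $\sigma := \beta_1 + \beta_{-1}$; by the remark following Definition~\ref{def:maximal_square} this equals $\beta_i + \beta_{-i}$ for every $i$, so the quantity $c := (\rho, \sigma)$ satisfies $(\rho, \beta_i) + (\rho, \beta_{-i}) = c$ for every $i$. Since $|\sigma|^2 = 2$, Cauchy--Schwarz gives $|c| \le \sqrt{2}$; combined with the fact that each $(\rho, \beta_j)$ lies in $\{-1, -1/2, 0, 1/2, 1\}$, this forces $c \in \{-1, -1/2, 0, 1/2, 1\}$. Since each of cases (1)--(5) corresponds to a distinct value of $c$, the ``moreover'' clause is built into the case analysis.

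The bulk of the proof is that case analysis, writing $p_j := (\rho, \beta_j)$ and using repeatedly that $p_j = \pm 1 \iff \rho = \pm \beta_j$. For $c = \pm 1$, if some $p_i = \pm 1$ then $\rho = \pm \beta_i$, and $(\beta_i, \beta_h) = 1/2$ for $h \neq \pm i$ delivers case~(1) or~(2). For $c = \pm 1/2$, the value $p_j = \pm 1$ would force $c = \pm 1$, so each pair $(p_i, p_{-i})$ must be $(\pm 1/2, 0)$ or $(0, \pm 1/2)$ --- exactly cases~(4) and~(5). For $c = 0$, the same elimination of $p_j = \pm 1$ leaves pairs $(0, 0)$, $(1/2, -1/2)$, or $(-1/2, 1/2)$; existence of a $(0, 0)$ pair puts us in case~(3). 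Two degenerate sub-cases remain to exclude: (a) $c = 1$ with every pair $(1/2, 1/2)$ (and its $c = -1$ mirror), and (b) $c = 0$ with no $(0, 0)$ pair.

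Sub-case (b) I would rule out by a short norm count: the vectors $\tau_i := \beta_i - \beta_{-i}$ are pairwise orthogonal of squared norm $2$, so in the forbidden scenario the projection onto $W := \operatorname{span} \Omega$ is $\pi_W(\rho) = \frac{1}{2} \sum_{i=1}^{k} \epsilon_i \tau_i$ with $\epsilon_i = \pm 1$, giving $|\pi_W(\rho)|^2 = k/2 > 1 = |\rho|^2$ for $k \ge 4$ --- a contradiction.

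Sub-case (a) is the main obstacle; I would handle it by producing an extension of $\Omega$ that contradicts maximality. Set $\rho_2 := \sigma - \rho$; a direct computation gives $|\rho_2|^2 = 1$ and $(\rho, \rho_2) = 0$. Since $\rho_2$ lies in the root lattice and has unit length, it is a root of $\Phi$. Moreover $(\rho_2, \beta_j) = 1 - 1/2 = 1/2$ for every $j$, and $\rho_2 \ne \pm \beta_i$ for any $i$ (else $\rho = \pm \beta_i$, contrary to our assumption). Therefore $\Omega \cup \{\rho, \rho_2\}$ would be a configuration of $k + 1$ orthogonal pairs with common sum $\sigma$ and every cross-pair angle equal to $\pi/3$, contradicting the fact that $k$ is the maximum size of such a configuration in $\Phi$.
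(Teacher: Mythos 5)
Your proof is correct, but it takes a genuinely different route from the paper's in the two hard configurations. The paper organizes the analysis by whether $\pm\rho\in\Omega$ and then by whether some $(\rho,\b_i)$ vanishes; its only delicate case is the one you call sub-case (b) (all pairs $(\pm1/2,\mp1/2)$), which it resolves by invoking Lemma~\ref{lemma: A3 in D4} (every $\A_3$ extends to a $\D_4$, proved via Carter's classification of Weyl-group orbits of subsystems) to manufacture an orthogonal pair of roots summing to $\sigma$ and orthogonal to $\rho$, hence lying in $\Omega$. You instead stratify by $c=(\rho,\sigma)$ and kill sub-case (b) with the Bessel-type estimate $|\pi_W(\rho)|^2=k/2>1$, which is elementary, self-contained, and makes Carter's tables (and indeed Lemma~\ref{lemma: A3 in D4}, whose only use in the paper is here) unnecessary for this lemma; that is a real simplification. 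For sub-case (a) the paper's implicit argument is a one-line root-string computation: $(\rho,\b_1)=1/2$ gives $\b_1-\rho\in\Phi$, then $(\b_1-\rho,\b_{-1})=-1/2$ gives $\sigma-\rho\in\Phi$, so $\{\rho,\sigma-\rho\}$ is an orthogonal pair summing to $\sigma$ and hence $\rho\in\Omega$, a contradiction; you reach the same point by appealing to the fact that norm-minimal vectors of an ADE root lattice are roots --- true and standard, but an external fact the paper never uses, and avoidable by the root-string argument just indicated. Also, your final appeal to ``$k$ is the maximum size of such a configuration'' should be pinned to what the paper actually cites, namely \cite[Theorem 1]{Vavilov_numerology_translation}: the set of \emph{all} roots occurring in pairs with sum $\sigma$ is a maximal square, hence coincides with $\Omega$, so $\rho\in\Omega$ directly; both your argument and the paper's rest on this same cited identification. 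Everything else (the values of $c$, cases (1)--(5), the ``exactly one'' via distinct values of $(\rho,\sigma)$) matches the paper's reasoning in substance.
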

\begin{proof}
If $\rho\in\Omega$, we have $\rho=\b_i$ for some $i$.
By the definition of maximal square,
the angle between $\rho$ and every other root in $\Omega$ is equal to
$\pi/3$. In this case, $(1)$ holds.

If $-\rho\in\Omega$, we can apply the above observation to $-\rho$; in
this case, $(2)$ holds.

Now we assume that $\pm\rho\not\in\Omega$. Hence, $(\rho,\b_i)$ is
equal to $0$ or $\pm 1/2$. Suppose that there exists $i$ such that
$(\rho,\b_i) = 0$.
\begin{itemize}
\item
If there exists $i$ such that $(\rho,\b_i)=(\rho,\b_{-i})=0$, then for
every $j\neq i$ we have
$$
(\rho,\b_j)+(\rho,\b_{-j})=(\rho,\b_j+\b_{-j})=(\rho,\b_i+\b_{-i})=0.
$$
This means that either $(\rho,\b_j)=(\rho,\b_{-j})=0$, or one of these
scalar products equals $1/2$, and the other equals
$-1/2$. Therefore $(3)$ holds. Note that in this case there exists
$j$ such that $(\rho,\b_j)\neq 0$: otherwise $\rho$ would be
orthogonal to every root in $\Omega$, which is impossible.

\item
If there exists $i$ such that $(\rho,\b_i)=0$ and
$(\rho,\b_{-i})=1/2$, then for every $j\neq i$ we have
$(\rho,\b_j)+(\rho,\b_{-j})=1/2$. This means that one of these
products is equal to $0$, and the other is equal to $1/2$.
Therefore $(4)$ holds.

\item
Similarly, if there exists $i$ such that $(\rho,\b_i)=0$ and
$(\rho,\b_{-i})=-1/2$, then for every $j\neq i$ we have
$(\rho,\b_j)+(\rho,\b_{-j})=-1/2$. This means that one of these
products is equal to $0$, and the other is equal to $-1/2$.
Therefore $(4)$ holds.
\end{itemize}
Finally let us consider the remaining case: suppose that for every $i$
the scalar product $(\rho,\b_i)$ is not equal to $0$.
We must show that this is impossible.
If for some $i$ we have 
$(\rho,\b_i)=(\rho,\b_{-i})=1/2$,
then $\b_i+\b_{-i}-\rho$ is a root, and its sum with $\rho$ is
$\b_i+\b_{-i}$. Therefore $\rho\in\Omega$, and we are in the case
$(1)$.

On the other hand, if for some $i$ we have
$(\rho,\b_i)=(\rho,\b_{-i})=-1/2$, then $\rho+\b_i+\b_{-i}\in\Phi$ and
$-\rho+(\rho+\b_i+\b_{-i})=\b_i+\b_{-i}$. Therefore $-\rho\in\Omega$,
and we are in the case $(2)$.

Finally, we can choose $i$ such that $(\rho,\b_i)=1/2$,
$(\rho,\b_{-i})=-1/2$.
Let us show that we are in the case $(3)$.
The roots $-\b_i,\rho,\b_{-i}$ span a root subsystem
$\Psi\subseteq\Phi$ of type $\A_3$. By Lemma~\ref{lemma: A3 in D4}, we
can embed it into a root subsystem of type $\D_4$.
Therefore there exists $\s\in\Phi$ such that
$\s\perp\b_i$, $\s\perp\b_{-i}$, and $(\s,\rho)=-1/2$. But
$-\s-\rho+\b_i,\s+\rho+\b_{-i}\in\Phi$, the sum of these two roots is
$\b_i+\b_{-i}$, and both of them are orthogonal to $\rho$. This means
that we are in the case $(3)$.
\end{proof}

\begin{definition}
Let $\Omega$ be a maximal square in $\Phi$, and let $\rho\in\Phi$ be
a root.
We say that the angle between $\rho$ and $\Omega$ is equal to
$\angle(\rho,\Omega) = 0$, $\pi$, $\pi/2$, $\pi/3$, $2\pi/3$, if in
the Lemma~\ref{lem:root_and_square_set} the condition
$(1)$, $(2)$, $(3)$, $(4)$, $(5)$ holds, respectively.
\end{definition}

\begin{lemma}\label{lem:modified_square}
Let $\Omega=\{\b_1,\dots,\b_{-1}\}$ be a maximal square.
Suppose that $j\in\{1,\dots,-1\}$.
Put $\g_i=\b_j-\b_i$ for every $i\neq\pm j$, $\g_j=\b_j$,
$\g_{-j}=-\b_{-j}$.
$\Omega'=\{\g_1,\dots,\g_{-1}\}$ is a maximal square.
\end{lemma}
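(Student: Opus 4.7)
The plan is to verify the three conditions defining a maximal square for $\Omega'$: every $\gamma_i$ lies in $\Phi$; $\gamma_i\perp\gamma_{-i}$ for every $i$; and $(\gamma_i,\gamma_h)=1/2$ whenever $i\neq\pm h$. All of these reduce to direct bilinear computations in the Euclidean space $E$ using the known scalar products inside $\Omega$, namely $(\b_i,\b_i)=1$, $(\b_i,\b_{-i})=0$, and $(\b_i,\b_h)=1/2$ for $i\neq\pm h$.

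First I would check that each $\gamma_i$ is a root. The cases $\gamma_j=\b_j$ and $\gamma_{-j}=-\b_{-j}$ are immediate. For $i\neq\pm j$ we have $(\b_i,\b_j)=1/2$, so $\b_j-\b_i\in\Phi$ because in a simply-laced root system $\a-\b\in\Phi$ whenever $(\a,\b)=1/2$.

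Next I would compute the orthogonalities. For the distinguished pair, $(\gamma_j,\gamma_{-j})=(\b_j,-\b_{-j})=-(\b_j,\b_{-j})=0$. For $i\neq\pm j$,
\[
(\gamma_i,\gamma_{-i})=(\b_j-\b_i,\b_j-\b_{-i})=1-\tfrac12-\tfrac12+0=0,
\]
using that $(\b_j,\b_i)=(\b_j,\b_{-i})=1/2$ and $(\b_i,\b_{-i})=0$.

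Finally I would check $(\gamma_i,\gamma_h)=1/2$ for $i\neq\pm h$, splitting into three subcases. If $\{i,h\}=\{j,h\}$ with $h\neq\pm j$, then $(\gamma_j,\gamma_h)=(\b_j,\b_j-\b_h)=1-1/2=1/2$. If $\{i,h\}=\{-j,h\}$ with $h\neq\pm j$, then $(\gamma_{-j},\gamma_h)=(-\b_{-j},\b_j-\b_h)=0+1/2=1/2$. For $i,h\neq\pm j$ and $i\neq\pm h$,
\[
(\gamma_i,\gamma_h)=(\b_j-\b_i,\b_j-\b_h)=1-\tfrac12-\tfrac12+\tfrac12=\tfrac12.
\]
Since all the pairwise scalar products of $\Omega'$ agree with those of a maximal square, the roots $\gamma_i$ form one, and there is no genuine obstacle beyond keeping track of the case $i=\pm j$ separately from $i\neq\pm j$.
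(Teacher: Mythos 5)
Your proof is correct and follows essentially the same route as the paper, which simply cites "an easy calculation" of the scalar products $(\g_i,\g_{-i})=0$ and $(\g_i,\g_h)=1/2$ for $i\neq\pm h$; you carry out exactly these computations, and additionally make explicit the (true and needed) point that each $\b_j-\b_i$ is a root because $(\b_j,\b_i)=1/2$ in a simply-laced system.
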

\begin{proof}
An easy calculation shows that $(\g_i,\g_{-i})=0$ for all $i$, and
$(\g_i,\g_t)=1/2$ for all $i\neq t$.
\end{proof}

\section{Action of the elementary subgroup}

Suppose $\rho\in\Phi$, $\xi\in R$. We work with the adjoint
representation, therefore the action of the elementary root unipotent
$x_\rho(\xi)$ on the basis of $V$ is described by the following lemma.

\begin{lemma}[Matsumoto]
\begin{enumerate}
\item If $\lambda\in\Phi$, $\lambda+\rho\notin\Phi\cup\{0\}$, then $x_\rho(\xi)e^\l=e^\l$;
\item if $\lambda,\lambda+\rho\in\Phi$, then $x_\rho(\xi)e^\l=e^\l+N_{\rho,\l}\xi e^{\l+\rho}$;
\item $x_\rho(\xi)\hat{e}^s=\hat{e}^s-\la\rho,\a_s\ra\xi e^\rho$ for $s=1,\dots,l$;
\item $x_\rho(\xi)e^{-\rho}=e^{-\rho}+\sum_{s=1}^l m_s(\rho)\xi\hat{e}^s-\xi^2e^\rho$.
\end{enumerate}
\end{lemma}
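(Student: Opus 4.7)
The plan is to treat this as an unpacking of the standard definition $x_\rho(\xi) = \exp(\xi\,\operatorname{ad}(e^\rho))$ acting on the adjoint module $V$. Over $\Co$ this is literal, and over an arbitrary commutative ring $R$ the formula in the lemma serves as the definition of $x_\rho(\xi)$, because the exponential series truncates in each case to a polynomial of degree $\leq 2$ in $\xi$ with integer coefficients. So the ``proof'' is really the computation that shows the truncation and identifies the surviving terms with the stated formulas.

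First I fix the identification $\hat{e}^s = h_{\a_s}$, making the zero-weight part of $V$ into the Cartan subalgebra of the simple Lie algebra, and record the Chevalley relations I will reuse:
\[
[h_{\a_s},e^\l] = \la\l,\a_s\ra\,e^\l,\qquad
[e^\rho,e^\l] = N_{\rho,\l}\,e^{\rho+\l}\ \text{when}\ \rho+\l\in\Phi,\qquad
[e^\rho,e^{-\rho}] = h_\rho.
\]
Since $\Phi$ is simply-laced, an additional identity is available, $h_\rho = \sum_{s=1}^l m_s(\rho)\,h_{\a_s}$, which is precisely what makes the coefficients $m_s(\rho)$ appear in part $(4)$.

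I then walk through the four cases. In $(1)$ the first bracket already vanishes by the hypothesis $\rho+\l\notin\Phi\cup\{0\}$ (with $\l\neq-\rho$), so the exponential fixes $e^\l$. In $(3)$ the first bracket gives $-\la\rho,\a_s\ra\,\xi\,e^\rho$ and the second bracket $[e^\rho,e^\rho]$ is zero. Case $(2)$ reduces to showing $[e^\rho,e^{\rho+\l}] = 0$: here I use that $\rho+\l\in\Phi$ forces $(\rho,\l) = -1/2$ in the simply-laced case, and then $(2\rho+\l,2\rho+\l) = 4+4(-\tfrac12)+1 = 3 \neq 1$, so $2\rho+\l\notin\Phi$. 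Case $(4)$ is the only one in which three brackets appear: they give successively $h_\rho$, then $[e^\rho,h_\rho] = -\la\rho,\rho\ra\,e^\rho = -2\,e^\rho$, and then $0$; the factor $\tfrac12$ from the exponential cancels the $-2$ to yield $-\xi^2 e^\rho$, and expanding $h_\rho$ in the basis $\hat{e}^s$ produces the stated $\sum_s m_s(\rho)\,\xi\,\hat{e}^s$ contribution.

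The only substantive point is the truncation argument in case $(2)$; this is the unique place where simply-lacedness enters essentially, aside from the coroot identity used in $(4)$. I expect no real obstacle beyond keeping track of signs and of the normalization $(\a,\a)=1$ that forces $\la\rho,\rho\ra=2$.
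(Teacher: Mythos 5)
Your computation is correct, but note that the paper does not prove this lemma at all: its ``proof'' is a one-line citation of Matsumoto's Lemma 2.3, so your direct verification via $x_\rho(\xi)=\exp\bigl(\xi\operatorname{ad}(e^\rho)\bigr)$ on the Chevalley basis is a genuinely different (and more self-contained) route. The substance of your argument checks out: the truncation in case $(2)$ via $(2\rho+\l,2\rho+\l)=3\neq 1$ is exactly the right use of the simply-laced normalization $(\a,\a)=1$ (and $2\rho+\l=0$ is impossible since $\Phi$ is reduced); the sign in $(3)$ comes out right from $[e^\rho,h_{\a_s}]=-\la\rho,\a_s\ra e^\rho$; and in $(4)$ the divided-power coefficient $\tfrac12$ against $[e^\rho,h_\rho]=-2e^\rho$ gives the integral term $-\xi^2e^\rho$, while the simply-laced coroot identity $h_\rho=\sum_s m_s(\rho)h_{\a_s}$ accounts for the $m_s(\rho)$. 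The one point you should state a bit more carefully is the passage to an arbitrary commutative ring $R$: the exponential computation lives over $\Co$ (or over $\Int$ on the admissible lattice, where you must also identify $\hat e^s$ with $h_{\a_s}$ in the chosen admissible base), and the formulas over $R$ are then obtained by base change precisely because all matrix entries you computed are polynomials in $\xi$ with integer coefficients — you gesture at this, and it is the standard argument, but it is the definitional step that replaces the exponential when $R$ has torsion. What your approach buys is transparency about where simply-lacedness and integrality enter; what the paper's citation buys is brevity and independence from the particular construction of the Chevalley group action.
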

\begin{proof}
See~\cite[Lemma 2.3]{Matsumoto}.
\end{proof}

This immediately implies the following description of the action of
$x_\rho(\xi)$ on the coordinates of $v=(v_\lambda)\in V$. We will use it
without any further reference.

\begin{lemma}\label{action_on_vector}
\begin{enumerate}
\item If $\lambda\in\Phi$, $\lambda-\rho\notin\Phi\cup\{0\}$, then $(x_\rho(\xi)v)_\l=v_\l$;
\item if $\lambda,\lambda-\rho\in\Phi$, then $(x_\rho(\xi)v)_\l=v_\l+N_{\rho,\l-\rho}\xi v_{\l-\rho}$;
\item $\widehat{(x_\rho(\xi)v)}_s=\hat{v}_s+m_s(\rho)\xi v_{-\rho}$.
\item $(x_\rho(\xi)v)_\rho=v_\rho-\sum_{s=1}^l\la\rho,\a_s\ra\xi \hat{v}_s-\xi^2v_{-\rho}$.
\end{enumerate}
In particular, if $\angle(\rho,\l)=\pi/2$, $2\pi/3$ or $\pi$, then
$(x_\rho(\xi)v)_\l=v_\l$.
\end{lemma}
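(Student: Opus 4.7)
The plan is to unwind the coordinate functional $v\mapsto v_\l$ (resp.\ $v\mapsto\hat v_s$) as the dual basis of the admissible basis $\{e^\l,\hat e^s\}$: by linearity
$$
x_\rho(\xi)v \;=\; \sum_{\mu\in\Phi}v_\mu\,x_\rho(\xi)e^\mu \;+\;\sum_{s=1}^l\hat v_s\,x_\rho(\xi)\hat e^s,
$$
so for each target basis vector the corresponding new coordinate is obtained simply by collecting its coefficient on the right-hand side. Matsumoto's lemma tells us exactly which basis vectors give a non-trivial contribution, so all four statements reduce to a bookkeeping exercise.

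For parts (1) and (2) I would examine which summands produce a multiple of $e^\l$. The vector $e^\l$ itself always contributes $v_\l$ (by (1) or (2) of Matsumoto, depending on whether $\l+\rho\in\Phi$). A different basis vector $e^\mu$, $\mu\neq\l$, contributes to $e^\l$ only through Matsumoto\,(2) (forcing $\mu=\l-\rho\in\Phi$) or through Matsumoto\,(4) (forcing $\mu=-\rho$ and $\l=\rho$, i.e.\ $\l-\rho=0$). The zero-weight vectors $\hat e^s$ only contribute to $e^\rho$ (Matsumoto\,(3)), again requiring $\l=\rho$. Under the hypothesis of (1), both exceptions are ruled out and only $v_\l$ survives; under the hypothesis of (2), the exceptions are still ruled out but $e^{\l-\rho}$ contributes $N_{\rho,\l-\rho}\xi\,v_{\l-\rho}$, giving the stated formula.

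For (3), the coefficient of $\hat e^s$ in $x_\rho(\xi)v$ only picks up contributions from $\hat e^s$ itself (giving $\hat v_s$ by Matsumoto\,(3)) and from $e^{-\rho}$ (giving $m_s(\rho)\xi\,v_{-\rho}$ by Matsumoto\,(4)). For (4), the coefficient of $e^\rho$ picks up $v_\rho$ from $e^\rho$ itself (by Matsumoto\,(1), since $2\rho\notin\Phi$), $-\la\rho,\a_s\ra\xi\,\hat v_s$ from each $\hat e^s$ (by Matsumoto\,(3)), and $-\xi^2\,v_{-\rho}$ from $e^{-\rho}$ (by Matsumoto\,(4)); no ordinary $e^\mu$ with $\mu\neq\rho$ can map to $e^\rho$ since $\mu+\rho=\rho$ forces $\mu=0\notin\Phi$.

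Finally, for the ``in particular'' clause I would observe that in a simply-laced root system $\l-\rho\in\Phi$ is equivalent to $(\rho,\l)=1/2$, while $\l-\rho=0$ is equivalent to $(\rho,\l)=1$. Each of the angles $\pi/2$, $2\pi/3$, $\pi$ gives $(\rho,\l)=0,-1/2,-1$, so in all three cases case (1) applies and the coordinate is unchanged. The only obstacle is a careful case split on which basis vector maps where, and since Matsumoto's lemma already provides the explicit images, there is no real difficulty beyond checking that no source of contributions has been overlooked.
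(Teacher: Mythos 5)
Your proposal is correct and is exactly the argument the paper intends: the lemma is just the coordinate-wise (dual) restatement of Matsumoto's lemma, obtained by expanding $v$ in the admissible basis and collecting coefficients, which is why the paper states it as an immediate consequence without further proof. The only cosmetic point is that for $\lambda=-\rho$ the diagonal contribution $v_\lambda$ comes from Matsumoto\,(4) rather than (1) or (2), but this does not affect any of the four formulas.
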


We will often use the following description of the action of
$x_\rho(\xi)$ on the zero weights.

\begin{lemma}\label{action_on_zero_weights}
Suppose that $\b,\rho\in\Phi$, $\xi\in R$, $v\in V$,
$w=x_\rho(\xi)v$. Then
$\sum_s\la\b,\a_s\ra\hat{w}_s
=\sum_s\la\b,\a_s\ra\hat{v}_s
+\xi\la\b,\rho\ra v_{-\rho}$.
\end{lemma}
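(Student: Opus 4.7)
The plan is to reduce the statement to a direct consequence of Lemma~\ref{action_on_vector}, part~$(3)$, combined with the linearity of $\la\b,\cdot\ra$ in its second argument in the simply-laced setting.

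First, I apply Lemma~\ref{action_on_vector}$(3)$ coordinate-wise: for each $s=1,\dots,l$ we have $\hat{w}_s = \hat{v}_s + m_s(\rho)\,\xi\,v_{-\rho}$. Multiplying by $\la\b,\a_s\ra$ and summing over $s$ gives
$$
\sum_{s=1}^l \la\b,\a_s\ra\,\hat{w}_s
= \sum_{s=1}^l \la\b,\a_s\ra\,\hat{v}_s
+ \xi\,v_{-\rho}\sum_{s=1}^l m_s(\rho)\la\b,\a_s\ra.
$$
So the lemma is equivalent to the identity $\sum_{s=1}^l m_s(\rho)\la\b,\a_s\ra = \la\b,\rho\ra$.

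For the remaining identity I would use that $\Phi$ is simply-laced, so every root has squared length $1$. Hence $\la\b,\a_s\ra = 2(\b,\a_s)$ for each $s$, and likewise $\la\b,\rho\ra = 2(\b,\rho)$. Expanding $\rho = \sum_s m_s(\rho)\a_s$ and using bilinearity of the Euclidean scalar product,
$$
\la\b,\rho\ra = 2(\b,\rho) = 2\Bigl(\b,\sum_{s=1}^l m_s(\rho)\a_s\Bigr) = \sum_{s=1}^l m_s(\rho)\cdot 2(\b,\a_s) = \sum_{s=1}^l m_s(\rho)\la\b,\a_s\ra,
$$
which closes the argument.

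There is no real obstacle here; the only point that warrants a line of comment is that $\la\b,\cdot\ra$ is not linear in its second argument for arbitrary root systems, but it is linear in the simply-laced case, which is our standing assumption on $\Phi$. Everything else is a bookkeeping step applying the previous lemma.
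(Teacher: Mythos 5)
Your proof is correct and follows essentially the same route as the paper: apply Lemma~\ref{action_on_vector}$(3)$ coordinatewise, sum against $\la\b,\a_s\ra$, and identify $\sum_s m_s(\rho)\la\b,\a_s\ra$ with $\la\b,\rho\ra$. Your explicit remark that linearity of $\la\b,\cdot\ra$ in the second argument relies on $\Phi$ being simply-laced is a point the paper's one-line computation leaves implicit, but it is the same argument.
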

\begin{proof}
\begin{align*}
\sum_s\la\b,\a_s\ra\hat{w}_s&=\sum_s\la\b,\a_s\ra(\hat{v}_s+m_s(\rho)\xi v_{-\rho})\\
&=\sum_s\la\b,\a_s\ra\hat{v}_s+\xi\sum_s\la\b,m_s(\rho)\a_s\ra v_{-\rho}\\
&=\sum_s\la\b,\a_s\ra\hat{v}_s+\xi\la\b,\rho\ra v_{-\rho}
\end{align*}
\end{proof}

Suppose that $v\in V$, $\a,\b\in\Phi$, and
$\Omega=\{\b_1,\dots,\b_{-1}\}$
is a maximal square such that
$\b_1=\a$, $\b_{-1}=\b$, and $\b_i\perp\b_{-i}$ for every $i$.
We need the following notation for the polynomials in the equations
$(\ref{pi/2-equation})$,
$(\ref{2pi/3-equation})$,
$(\ref{pi-equation})$:
\begin{align*}
f^{\pi/2}_{\a,\b}(v)&=v_\a v_\b-\sum_{\{\g,\d\}\in S_{\pi/2}(\a,\b)}N_{\a,-\g}N_{\b,-\d}v_\g v_\d,\\
f^{2\pi/3}_{\a,\b}(v)&=\sum_{i\neq\pm 1}N_{\b_1,-\b_i}v_{\b_1-\b_i}v_{\b_i}-v_{\b_1}\sum_{s=1}^l\la\b_{-1},\a_s\ra\hat{v}_s,\\
f^{\pi}_{\a,\b}(v)&=\sum_{i\neq\pm 1}(v_{\b_1-\b_i}v_{\b_i-\b_1}-v_{-\b_i}v_{\b_i})-
\sum_{s=1}^l\la\b_1,\a_s\ra\hat{v}_s\cdot\sum_{s=1}^l\la\b_{-1},\a_s\ra\hat{v}_s.
\end{align*}

\begin{proposition}\label{key_proposition}
Let $\a,\b,\rho\in\Phi$ be roots such that $\a\perp\b$, and let
$v\in V$ be a vector.
 Take $\xi\in R$ and put $w=x_\rho(\xi)v$.
Suppose that $\ph\in\{\pi/2,2\pi/3,\pi\}$.
Then $f^\ph_{\a,\b}(w)$ is a linear combination of polynomials of the
form $f^\psi_{\g,\d}(v)$.
\end{proposition}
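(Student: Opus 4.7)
The plan is a case analysis on the relative position of $\rho$ with respect to the maximal square $\Omega=\{\b_1,\dots,\b_{-1}\}$ associated with $(\a,\b)$ (with $\b_1=\a$, $\b_{-1}=\b$). By Lemma~\ref{lem:root_and_square_set} the angle $\angle(\rho,\Omega)$ takes one of five values, so there are five main cases, each to be treated for each of the three choices of $\ph$. The crucial preliminary observation is that $x_\rho(\xi)$ fixes every basis vector $e^\l$ with $\angle(\rho,\l)\in\{\pi/2,2\pi/3,\pi\}$ by Lemma~\ref{action_on_vector}; in each case this leaves only a short, explicit list of coordinates of $v$ appearing in $f^\ph_{\a,\b}$ that are actually altered, and this list is exactly the one recorded by Lemma~\ref{lem:root_and_square_set}.

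Within each case I would (i)~expand $w_\l=(x_\rho(\xi)v)_\l$ via Lemma~\ref{action_on_vector} and expand the zero-weight sums appearing in $f^{2\pi/3}$ and $f^\pi$ via Lemma~\ref{action_on_zero_weights}; (ii)~substitute into $f^\ph_{\a,\b}(w)$ and group the result as $f^\ph_{\a,\b}(v)+\xi A_1(v)+\xi^2 A_2(v)$; (iii)~recognise $A_1$ and $A_2$ as linear combinations of polynomials $f^\psi_{\g,\d}(v)$ for appropriate orthogonal pairs $\{\g,\d\}$ and types $\psi$. For the $\xi^2$-coefficient (which always carries a factor $v_{-\rho}^2$) I expect $A_2$ to be a multiple of an $f^{\pi/2}$-equation for a new maximal square built from $\Omega$ and $\rho$ using Lemma~\ref{lem:modified_square}; the identity $(C5)$ from~\cite{Vavilov_znaki_translation} makes the sign coefficients line up. For the $\xi$-coefficient, the bilinear corrections from parts~(2) and~(4) of Lemma~\ref{action_on_vector} combine with the linear zero-weight correction $\xi\la\b_{\pm 1},\rho\ra v_{-\rho}$ of Lemma~\ref{action_on_zero_weights} into a sum of $f^{2\pi/3}$- and $f^\pi$-equations whose orthogonal pairs are obtained by replacing some $\b_i\in\Omega$ by an expression involving $\rho$.

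The main obstacle is the bookkeeping in cases (3), (4), (5) of Lemma~\ref{lem:root_and_square_set}, where $\rho$ has nontrivial angle with many of the $\b_i$ at once and the interaction between the zero-weight terms and the root-coordinate products becomes most delicate. Here Lemma~\ref{lemma: A3 in D4} is the essential tool: it guarantees enough room inside $\Phi$ to construct the auxiliary maximal squares needed in step~(iii), and the structure-constant identities $(C1)$--$(C5)$ reduce the remaining sign chasing to a finite combinatorial check. The overall calculation is mechanical but lengthy; I would organise it by first tabulating, for each of the five cases, the explicit list of altered coordinates together with the explicit list of $f^\psi_{\g,\d}$-equations expected to appear in the final expression, and then verifying the identification case by case.
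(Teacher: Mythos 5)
Your skeleton is the same as the paper's: condition on the five relative positions of $\rho$ and the maximal square $\Omega$ given by Lemma~\ref{lem:root_and_square_set}, expand $f^\varphi_{\alpha,\beta}(w)$ coordinatewise with Lemmas~\ref{action_on_vector} and~\ref{action_on_zero_weights}, and recognise the $\xi$- and $\xi^2$-parts as combinations of polynomials $f^\psi_{\gamma,\delta}(v)$ attached to squares built via Lemma~\ref{lem:modified_square}. But the proposal stops at the level of intention: the whole content of the proposition is exactly those case-by-case identifications, and none of them is carried out. Moreover, the structural predictions meant to guide the ``mechanical'' check are false in several places, which is evidence the check was not done. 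The $\xi^2$-coefficient does not carry a factor $v_{-\rho}^2$: for $\rho=\beta_1$ acting on the $\pi/2$-equation it contains terms $v_{\beta_i-\beta_1}v_{\beta_{-i}-\beta_1}$ with no $v_{-\rho}$ at all, and the full $\xi^2$-part is $-\xi^2 f^{\pi/2}_{\beta_{-1},-\beta_1}(v)$; for the $2\pi/3$-equation with $\rho=\beta_1$ the $\xi^2$-part is $\xi^2 f^{2\pi/3}_{-\beta_1,-\beta_{-1}}(v)$, i.e.\ a $2\pi/3$-polynomial, not an $f^{\pi/2}$. Likewise the $\xi$-coefficient is not always a combination of $f^{2\pi/3}$- and $f^{\pi}$-polynomials: when $\angle(\rho,\Omega)=2\pi/3$ the $2\pi/3$-equation produces $\xi f^{\pi/2}_{\beta_1,-\rho}(v)$, and when $\rho=-\beta_{-1}$ it produces $2\xi f^{\pi/2}_{\beta_1,\beta_{-1}}(v)$. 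So the asserted ``lining up'' of signs and shapes cannot be taken on faith.

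The second genuine divergence is in the cases you single out as the main obstacle, $\angle(\rho,\Omega)=\pi/2$ and $\pi/3$, which you propose to attack by direct expansion with Lemma~\ref{lemma: A3 in D4} as the essential tool. The paper never computes these cases at all: it writes $x_\rho(\xi)$ as a commutator $[x_{\beta_{-j}+\rho}(\xi),x_{-\beta_{-j}}(\pm 1)]$, respectively $[x_{\rho-\beta_{-1}}(\xi),x_{\beta_{-1}}(\pm 1)]$, of root elements whose angles with $\Omega$ are $0$ and $\pi$, respectively $2\pi/3$ and $0$, and so reduces to cases already treated; Lemma~\ref{lemma: A3 in D4} enters only through the proof of Lemma~\ref{lem:root_and_square_set} and is not what makes the hard cases go through. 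Your outline therefore has no concrete mechanism for its hardest step beyond the assertion of a ``finite combinatorial check'' (and a direct expansion there is genuinely messier, since $\rho$ then fails to be orthogonal, antiparallel or obtuse to half the roots of $\Omega$, so many coordinates move at once). In short: same strategy as the paper where the paper computes, but the computations themselves are missing and partly mispredicted, and the commutator-formula reduction that makes the $\pi/2$- and $\pi/3$-cases tractable is absent.
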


We shall prove Proposition~\ref{key_proposition} in the next
section. Now we can derive our main result from it.

\begin{theorem}\label{thm:main}
The set of vectors $v\in V$ satisfying the equations
$(\ref{pi/2-equation})$,
$(\ref{2pi/3-equation})$,
$(\ref{pi-equation})$ for all $\a,\b\in\Phi$, $\a\perp\b$,
is invariant under the action of the group $E(\Phi,R)$.
\end{theorem}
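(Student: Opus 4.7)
My plan is to reduce the theorem to Proposition \ref{key_proposition}, which does all the real work. The elementary group $E(\Phi, R)$ is by definition generated by the elementary root unipotents $x_\rho(\xi)$ for $\rho \in \Phi$ and $\xi \in R$, so it suffices to verify that each such generator preserves the common zero set of our three families of polynomials.

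I would fix a vector $v \in V$ satisfying every equation $f^{\pi/2}_{\a,\b}(v) = 0$, $f^{2\pi/3}_{\a,\b}(v) = 0$, $f^{\pi}_{\a,\b}(v) = 0$ for all orthogonal pairs $\a, \b \in \Phi$. Fix $\rho \in \Phi$, $\xi \in R$, put $w = x_\rho(\xi) v$, and pick an arbitrary orthogonal pair $\a \perp \b$ together with an arbitrary shape $\ph \in \{\pi/2, 2\pi/3, \pi\}$; the goal is to show $f^\ph_{\a,\b}(w) = 0$. By Proposition \ref{key_proposition}, $f^\ph_{\a,\b}(w)$ is a linear combination, with coefficients in $R$ depending on $\xi$, $\rho$ and $\{\a,\b\}$, of polynomials of the form $f^\psi_{\g,\d}(v)$ with $\g \perp \d$ and $\psi \in \{\pi/2, 2\pi/3, \pi\}$. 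By the standing hypothesis on $v$ every such summand vanishes, whence $f^\ph_{\a,\b}(w) = 0$. Since this holds for every orthogonal pair and every equation shape, $w$ again belongs to the solution set; iterating over an arbitrary product of elementary generators yields invariance under all of $E(\Phi, R)$.

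The only nontrivial step is the appeal to Proposition \ref{key_proposition}: once we know that applying $x_\rho(\xi)$ to any one of the polynomials $f^\ph_{\a,\b}$ produces a linear combination of polynomials of the same three types evaluated at $v$, the invariance of their common zero scheme is purely formal. Proving that proposition is where the real obstacle lies, and where I would expect a careful case analysis based on the relative position of $\rho$ to the maximal square determined by $\{\a,\b\}$ (as classified in Lemma \ref{lem:root_and_square_set}), combined with the explicit coordinate-change formulas of Lemma \ref{action_on_vector} and Lemma \ref{action_on_zero_weights}, and possibly the passage to a modified square supplied by Lemma \ref{lem:modified_square} when $\rho$ itself belongs (up to sign) to the square.
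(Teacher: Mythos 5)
Your proposal is correct and follows exactly the paper's own argument: the theorem is reduced to the generators $x_\rho(\xi)$ of $E(\Phi,R)$, and Proposition~\ref{key_proposition} (proved separately by the case analysis you anticipate) shows each $f^\ph_{\a,\b}(w)$ is a linear combination of polynomials $f^\psi_{\g,\d}(v)$, all of which vanish by hypothesis. Nothing is missing.
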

\begin{proof}
It suffices to prove that if $v\in V$ satisfies the above equations,
then $w=x_\rho(\xi)v$ satisfies them for every $\rho\in\Phi$, $\xi\in
R$. Indeed, by Proposition~\ref{key_proposition}, each of the
polynomials $f^\ph_{\a,\b}(w)$ is equal to a linear combination of
these polynomials applied to $v$, which is zero.
\end{proof}

\begin{corollary}
If $v\in V$ is a column of an element $g\in E(\Phi,R)$ corresponding
to any root $\rho\in\Lambda^*=\Phi$, then $v$ satisfies the equations
$(\ref{pi/2-equation})$,
$(\ref{2pi/3-equation})$,
$(\ref{pi-equation})$ for all $\a,\b\in\Phi$, $\a\perp\b$.
\end{corollary}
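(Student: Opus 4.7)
The plan is to deduce the corollary directly from Theorem~\ref{thm:main}. In the adjoint representation, the $\rho$-column of an element $g\in E(\Phi,R)$ with respect to the admissible basis $\{e^\l\}_{\l\in\Lambda}$ is exactly the image vector $g\cdot e^\rho$. Since Theorem~\ref{thm:main} asserts that the common zero locus of the $\pi/2$-, $2\pi/3$-, and $\pi$-equations is invariant under $E(\Phi,R)$, it suffices to verify that the single vector $v_0=e^\rho$ lies in this locus.

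So the only real work is to check that $v_0=e^\rho$ satisfies every equation. Its coordinates are $(v_0)_\l=\delta_{\l,\rho}$ for $\l\in\Phi$ and $\widehat{(v_0)}_s=0$ for $s=1,\dots,l$. I would then run through the three families of equations for an arbitrary orthogonal pair $\a\perp\b$. The $\pi/2$-equation is a sum of products $v_\g v_\d$ over unordered pairs of distinct roots (including $\{\a,\b\}$ itself, since $\a\perp\b$ forces $\a\neq\b$), so at most one factor in any term can equal $v_\rho=1$, making both sides vanish. The $2\pi/3$-equation has a left-hand side proportional to $\sum_s\la\b,\a_s\ra\hat{v}_s=0$, while on the right-hand side each summand $N_{\b_1,-\b_i}v_{\b_1-\b_i}v_{\b_i}$ is a product of values at two distinct roots, hence zero. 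The $\pi$-equation has left-hand side a product of two vanishing linear forms in the $\hat{v}_s$, and on the right-hand side every summand is of the form $v_\g v_{-\g}$ (by Lemma~\ref{pi-equiv}), which is zero because $\rho\neq-\rho$.

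There is essentially no obstacle: the substantive content is already packaged in Theorem~\ref{thm:main}, and the base-point verification is a straightforward bookkeeping exercise that exploits the fact that in every summand of every equation, either a zero-weight factor $\sum_s\la\cdot,\a_s\ra\hat{v}_s$ is present, or the two root-coordinates being multiplied are indexed by distinct roots. The only point that requires a moment of thought is matching the meaning of ``column of $g$'' with the vector $g\cdot e^\rho$ under the admissible basis, after which invariance under $E(\Phi,R)$ does all the work.
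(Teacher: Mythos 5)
Your proposal is correct and follows exactly the paper's own argument: identify the $\rho$-column of $g$ with $g\cdot e^\rho$, check that $e^\rho$ satisfies all three families of equations (which the paper dismisses as obvious and you verify explicitly), and then invoke the $E(\Phi,R)$-invariance from Theorem~\ref{thm:main}.
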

\begin{proof}
We have $v=ge^\rho$. It is obvious that $e^\rho$ satisfies those
equations, so by Theorem~\ref{thm:main} $v$ satisfies them too.
\end{proof}

\section{Proof of Proposition~\ref{key_proposition}}

Let $\Omega=\{\b_1,\dots,\b_{-1}\}$
is a maximal square such that
$\b_1=\a$, $\b_{-1}=\b$, and $\b_i\perp\b_{-i}$ for every $i$. 
We shall explore the five cases described in
Lemma~\ref{lem:root_and_square_set}.

\begin{enumerate}
\item
Suppose that $(\rho,\Omega) = 0$. This means that $\rho=\beta_j$ for
some $j$.
\begin{itemize}
\item {\bf The $\pi/2$-equation.}
The discussion following Definition~\ref{def:maximal_square} shows
that the $\pi/2$-equation depends only on a maximal square and not on
the choice of orthogonal roots $\a$, $\b$ in it. Thus we may assume
that $j=1$.
Then
\begin{align*}
& f^{\pi/2}_{\a,\b}(w)-f^{\pi/2}_{\a,\b}(v) \\
&=
\left( -\sum_{s=1}^l\la\b_1,\a_s\ra\xi\hat{v}_s - \xi^2
  v_{-\b_1}\right)v_{\b_{-1}}\\
& - \xi\sum_{i\neq\pm 1}N_{\b_1,-\b_i}N_{\b_{-1},-\b_{-i}}
N_{\b_1,\b_i-\b_1}v_{\b_i-\b_1}v_{\b_{-i}}\\
& - \xi^2\sum_{i\geq 2}N_{\b_1,-\b_i}N_{\b_{-1},-\b_{-i}}
N_{\b_1,\b_i-\b_1}N_{\b_1,\b_{-i}-\b_1} v_{\b_i-\b_1}v_{\b_{-i}-\b_1}.
\end{align*}
First, note that
$N_{\b_1,\b_i-\b_1}=-N_{\b_1,-\b_i}$.
Moreover,
$N_{\b_{-1},-\b_{-i}} = N_{\b_{-1},\b_1-\b_i}$ and
$N_{\b_1,\b_{-i}-\b_1} = -N_{-\b_1,\b_1-\b_{-i}}$.
Therefore the terms on the right-hand side containing $\xi^2$ sum up
to $-\xi^2 f^{\pi/2}_{\b_{-1},-\b_1}(v)$.
The rest sums up to $-\xi f^{2\pi/3}_{\b_{-1},-\b_1}(v)$. Indeed,
$N_{\b_1,-\b_i} = - N_{\b_1,\b_i-\b_1}$, and (by
Lemma~\ref{lem:modified_square}) the roots $\b_i-\b_1$ together with
$\b_{-1}$ and $-\b_1$ form a maximal square.
Hence
$$
f^{\pi/2}_{\a,\b}(w)-f^{\pi/2}_{\a,\b}(v)
= -\xi f^{2\pi/3}_{\b_{-1},-\b_1}(v) - \xi^2 f^{\pi/2}_{\b_{-1},-\b_1}(v).
$$
\item {\bf The $2\pi/3$-equation.}
First suppose that $j\neq\pm 1$.
Then $w_{\b_1-\b_i} = v_{\b_1-\b_i}$ for $i\neq -j$.
Thus
\begin{align*}
f^{2\pi/3}_{\a,\b}(w)-f^{2\pi/3}_{\a,\b}(v) &=
\sum_{\substack{i\neq\pm 1\\i\neq\pm j}}
\left(N_{\b_1,-\b_i}v_{\b_1-\b_i}\xi
N_{\b_j,\b_i-\b_j}v_{\b_i-\b_j}\right)\\
&+N_{\b_1,-\b_{-j}}\xi N_{\b_j,-\b_{-1}}v_{-\b_{-1}}v_{\b_{-j}}\\
&- N_{\b_1,-\b_j} v_{\b_1-\b_j}\sum_{s=1}^l\la\b_j,\a_s\ra\xi\hat{v}_s\\
&- N_{\b_1,-\b_j} v_{\b_1-\b_j}\xi^2v_{-\b_j}\\
&-\xi N_{\b_j,\b_1-\b_j}v_{\b_1-\b_j}
\sum_{s=1}^l\la\b_{-1},\a_s\ra\hat{v}_s\\
&-v_{\b_1}\xi\la\b_{-1},\b_j\ra v_{-\b_j}\\
&-\xi^2 N_{\b_j,\b_1-\b_j}v_{\b_1-\b_j}\la\b_{-1},\b_j\ra v_{-\b_j}.
\end{align*}
Note that $\la\b_{-1},\b_j\ra=1$ and $N_{\b_j,\b_1-\b_j} = -
N_{\b_1,-\b_j}$, so the terms containing $\xi^2$ cancel each other out.
Using the cocycle identity
$$N_{\b_1,-\b_i}N_{\b_j-\b_1,\b_1-\b_i} =
N_{\b_j,-\b_i}N_{\b_j-\b_1,\b_1}$$
and Lemma~\ref{lem:modified_square},
it is easy to show that the rest
yields
$$
f^{2\pi/3}_{\a,\b}(w)-f^{2\pi/3}_{\a,\b}(v) = \xi
N_{\b_1,-\b_j}f^{2\pi/3}_{\b_1-\b_j,\b_1-\b_{-j}}(v).
$$

Now suppose that $j=1$. By Lemma 9 we have
$\sum_{s=1}^l\la\b_{-1},\a_s\ra\hat{w}_s =
\sum_{s=1}^l\la\b_{-1},\a_s\ra\hat{v}_s$, since $\la\b_{-1},\b_1\ra =
0$. Therefore
\begin{align*}
f^{2\pi/3}_{\a,\b}(w) - f^{2\pi/3}_{\a,\b}(v) &=
\sum_{i\neq\pm 1} N_{\b_1,-\b_i}\xi N_{\b_1,-\b_i}v_{-\b_i}v_{\b_i}\\
&+ \sum_{i\neq\pm 1} N_{\b_1,-\b_i}v_{\b_1-\b_i}\xi
N_{\b_1,\b_i-\b_1}v_{\b_i-\b_1}\\
&+ \sum_{i\neq\pm 1} N_{\b_1,-\b_i}\xi^2
N_{\b_1,-\b_i}N_{\b_1,\b_i-\b_1}v_{-\b_i}v_{\b_i-\b_1}\\
&+ \left(\sum_{s=1}^l\la\b_1,\a_s\ra\xi\hat{v}_s+\xi^2v_{-\b_1}\right)
\sum_{s=1}^l\la\b_{-1},\a_s\ra\hat{v}_s.
\end{align*}
Note that $N_{\b_1,\b_i-\b_1} = -N_{\b_1,-\b_i}$.
It is easy to see that
$$
f^{2\pi/3}_{\a,\b}(w) - f^{2\pi/3}_{\a,\b}(v) 
= -\xi f^{\pi}_{\a,\b}(v) + \xi^2 f^{2\pi/3}_{-\b_1,-\b_{-1}}(v).
$$

Finally, suppose that $j=-1$. In this case
$w_{\b_1-\b_i}=v_{\b_1-\b_i}$ for every $i\neq\pm 1$, and
$w_{\b_1}=v_{\b_1}$. We obtain
\begin{align*}
f^{2\pi/3}_{\a,\b}(w) - f^{2\pi/3}_{\a,\b}(v) &=
\sum_{i\neq\pm 1}N_{\b_1,-\b_i}v_{\b_1-\b_i}\xi
N_{\b_{-1},\b_i-\b_{-1}}v_{\b_i-\b_{-1}}\\
&-v_{\b_1}\xi\la\b_{-1},\b_{-1}\ra v_{-\b_{-1}}\\
&= -2\xi f^{\pi/2}_{\b_1,-\b_{-1}}(v).
\end{align*}
\item {\bf The $\pi$-equation.} 
First suppose that $j\neq\pm 1$. Then $w_{\b_1-\b_i}=v_{\b_1-\b_i}$
for $i\neq -j$, and $w_{\b_i-\b_1}=v_{\b_i-\b_1}$ for $i\neq
j$. Moreover, $w_{-\b_i}=v_{-\b_i}$ for $i\neq -j$. Therefore
\begin{align*}
f^{\pi}_{\a,\b}(w) - f^{\pi}_{\a,\b}(v) &=
-\sum_{\substack{i\neq\pm 1\\i\neq\pm j}}v_{-\b_i}\xi
N_{\b_j,\b_i-\b_j}v_{\b_i-\b_j}\\
& + v_{\b_1-\b_j}\xi N_{\b_j,-\b_1}v_{-\b_1}\\
& + v_{-\b_j}\left(\xi^2 v_{-\b_j} +
  \sum_{s=1}^l\la\b_j,\a_s\ra\xi\hat{v}_s\right)\\
& + \xi N_{\b_j,-\b_{-1}}v_{-\b_{-1}}v_{\b_{-j}-\b_1}\\
& - \xi N_{\b_j,-\b_j-\b_{-j}}v_{-\b_j-\b_{-j}}v_{\b_{-j}}\\
& - \sum_{s=1}^l\la\b_1,\a_s\ra\hat{v}_s\xi v_{-\b_j}\\
& - \sum_{s=1}^l\la\b_{-1},\a_s\ra\hat{v}_s\xi v_{-\b_j}\\
& - \xi^2v_{-\b_j}v_{-\b_j}.
\end{align*}
The terms containing $\xi^2$ cancel each other out. Arguing as above,
it is not hard to see that
$$
f^{\pi}_{\a,\b}(w) - f^{\pi}_{\a,\b}(v) = \xi f^{2\pi/3}_{-\b_j,\b_{-j}}(v).
$$

For $j=1$ we have $w_{\b_i-\b_1}=v_{\b_i-\b_1}$ and
$w_{-\b_i}=v_{-\b_i}$ for all $i\neq\pm 1$. Moreover, by
Lemma~\ref{action_on_zero_weights}, we have
$$\sum_{s=1}^l\la\b_{-1},\a_s\ra\hat{w}_s
 = \sum_{s=1}^l\la\b_{-1},\a_s\ra\hat{v}_s,$$
since $\la\b_{-1},\b_1\ra=0$.

Thus
\begin{align*}
f^{\pi}_{\a,\b}(w) - f^{\pi}_{\a,\b}(v) &=
\sum_{i\neq\pm 1}\xi N_{\b_1,-\b_i}v_{-\b_i}v_{\b_i-\b_1}\\
&- \sum_{i\neq\pm 1}v_{-\b_i}\xi N_{\b_1,\b_i-\b_1}v_{\b_i-\b_1}\\
&- \xi\la\b_1,\b_1\ra
v_{-\b_1}\cdot\sum_{s=1}^l\la\b_{-1},\a_s\ra\hat{v}_s\\
& = -2\xi f^{2\pi/3}_{-\b_1,-\b_{-1}}(v).
\end{align*}

Finally, suppose that $j=-1$. Then $w_{\b_1-\b_i}=v_{\b_1-\b_i}$ and
$w_{-\b_i}=v_{-\b_i}$ for all $i\neq\pm 1$. Moreover, by
Lemma~\ref{action_on_zero_weights}, we have
$\sum_{s=1}^l\la\b_1,\a_s\ra\hat{w}_s
 = \sum_{s=1}^l\la\b_1,\a_s\ra\hat{v}_s$, since
 $\la\b_1,\b_{-1}\ra=0$.
Thus
\begin{align*}
f^{\pi}_{\a,\b}(w) - f^{\pi}_{\a,\b}(v) &=
\sum_{i\neq\pm 1}v_{\b_1-\b_i}\xi N_{\b_{-1},-\b_{-i}}v_{-\b_{-i}}\\
&-\sum_{i\neq\pm 1}v_{-\b_i}\xi
N_{\b_{-1},\b_i-\b_{-1}}v_{\b_i-\b_{-1}}\\
&- \sum_{s=1}^l\la\b_1,\a_s\ra\hat{v}_s \cdot \xi\la\b_{-1},\b_{-1}\ra
v_{-\b_{-1}} \\
&= -2\xi f^{2\pi/3}_{-\b_{-1},-\b_1}(v).
\end{align*}

\end{itemize}
\item
Suppose that $(\rho,\Omega) = \pi$. This means that $\rho=-\beta_j$ for
some $j$.
\begin{itemize}
\item {\bf The $\pi/2$-equation.} 
Note that $\beta_i-\rho=\beta_i+\beta_j$ is never a root, hence by
Lemma~\ref{action_on_vector} we have
$w_{\beta_i} = v_{\beta_i}$ for all $i$, and $f^{\pi/2}_{\a,\b}(w) =
f^{\pi/2}_{\a,\b}(v)$.
\item {\bf The $2\pi/3$-equation.}
Here we have $w_{\beta_i}=v_{\beta_i}$, $w_{\beta_1}=v_{\beta_1}$.

If $j\neq\pm 1$, then $w_{\beta_1-\beta_i}=v_{\beta_1-\beta_i}$ for
$i\neq j$. Using Lemma~\ref{action_on_zero_weights}, we get
$$
f^{2\pi/3}_{\a,\b}(w) = 
f^{2\pi/3}_{\a,\b}(v) +
N_{\beta_1,-\beta_j}N_{-\beta_j,\beta_1}\xi v_{\beta_1}v_{\beta_j} -
v_{\beta_1}\xi\la\beta_{-1},-\beta_j\ra v_{\beta_j}.$$
It remains to note that $N_{\beta_1,-\beta_j}=-N_{-\beta_j,\beta_1}$
and $\la\beta_{-1},-\beta_j\ra=-1$, so that $f^{2\pi/3}_{\a,\b}(w) = 
f^{2\pi/3}_{\a,\b}(v)$.

For $j=1$ we have $w_{\beta_1-\beta_i}=v_{\beta_1-\beta_i}$ for all
$i$, and (by Lemma~\ref{action_on_zero_weights})
$$\sum_{s=1}^l\la\beta_{-1},\alpha_s\ra\hat{w}_s
- \sum_{s=1}^l\la\beta_{-1},\alpha_s\ra\hat{v}_s =
\xi\la\beta_{-1},-\beta_1\ra v_{\beta_1} = 0,$$ so that
$f^{2\pi/3}_{\a,\b}(w) = f^{2\pi/3}_{\a,\b}(v)$ again.

Finally, if $j=-1$, then
$$
w_{\beta_1-\beta_i} = v_{\beta_1-\beta_i} +
N_{-\beta_{-1},\beta_{-i}}\xi v_{\beta_{-i}}$$
and $$\sum_{s=1}^l\la\beta_{-1},\alpha_s\ra\hat{w}_s
- \sum_{s=1}^l\la\beta_{-1},\alpha_s\ra\hat{v}_s =
\xi\la\beta_{-1},-\beta_{-1}\ra v_{\beta_{-1}} = -2\xi
v_{\beta_{-1}}.$$
Therefore
\begin{align*}
f^{2\pi/3}_{\a,\b}(w) - f^{2\pi/3}_{\a,\b}(v) &=
\sum_{i\neq\pm 1}N_{\beta_1,-\beta_i}N_{-\beta_{-1},\beta_{-i}}\xi
v_{\beta_{-i}}v_{\beta_i} + 2\xi v_{\beta_1}v_{\beta_{-1}}\\
&= -\sum_{i\geq 2}2\xi N_{\beta_1,-\beta_i}N_{\beta_{-1},-\beta_{-i}} +
2\xi v_{\beta_1}v_{\beta_{-1}} \\
&= 2\xi f^{\pi/2}_{\beta_1,\beta_{-1}}(v).
\end{align*}
\item {\bf The $\pi$-equation.} 
First suppose that $j\neq\pm 1$. Then $w_{\b_1-\b_i}=v_{\b_1-\b_i}$
for all $i\neq j$, and $w_{\b_i-\b_1}=v_{\b_i-\b_1}$ for all $i\neq
-j$. Moreover, $w_{\b_i}=v_{\b_i}$ for all $i$.
This means that
\begin{align*}
f^{\pi}_{\alpha,\beta}(w)-f^{\pi}_{\a,\b}(v) &=
\sum_{\substack{i\neq\pm 1\\i\neq\pm j}}(-\xi
N_{-\b_j,\b_j-\b_i}v_{\b_j-\b_i}v_{\b_i})\\
&+ \xi N_{-\b_j,\b_1}v_{\b_1}v_{\b_j-\b_1}\\
&+ v_{\b_1-\b_{-j}}\xi N_{-\b_j,\b_{-1}}v_{\b_{-1}}\\
&-\left(-\sum_{s=1}^l\la -\b_j,\a_s\ra\xi\hat{v}_s-\xi^2
  v_{\b_j}\right)v_{\b_j}\\
& -\sum_{s=1}^l\la\beta_1,\a_s\ra\hat{v}_s\xi\la\b_{-1},-\b_j\ra
v_{\b_j}\\
& -\sum_{s=1}^l\la\beta_{-1},\a_s\ra\hat{v}_s\xi\la\b_1,-\b_j\ra
v_{\b_j}\\
& -\xi^2v_{\b_j}v_{\b_j}.
\end{align*}
The last four lines sum up to
$$-\sum_{s=1}^l\la -\b_1-b_{-1}+\b_j,\a_s\ra\xi\hat{v}_s v_{\beta_j}
 = -\xi v_{\beta_j}\sum_{s=1}^l\la
 -\b_{-j},\a_s\ra\hat{v}_s.$$
Applying Lemma~\ref{lem:modified_square}
 and noticing that $N_{-\b_j,\b_j-\b_i}=-N_{\b_j,\b_i-\b_j}$ we
 finally obtain
$$
f^{\pi}_{\alpha,\beta}(w)-f^{\pi}_{\a,\b}(v) = \xi f^{2\pi/3}_{\b_j,-\b_{-j}}(v).
$$

Next, suppose that $j=1$. Arguing exactly like in case (1), we get
$$
f^{\pi}_{\a,\b}(w) - f^\pi_{\a,\b}(v) =
2\xi f^{2\pi/3}_{\b_1,-\b_{-1}}(v).
$$
Similarly, for $j=-1$,
$$
f^{\pi}_{\a,\b}(w) - f^\pi_{\a,\b}(v) =
-2\xi f^{2\pi/3}_{\b_{-1},\b_1}(v).
$$
\end{itemize}
\item
Suppose that $\angle(\rho,\Omega) = \pi/2$. This means that for
some $j$ we have $(\rho,\beta_j)=1/2$ and $(\rho,\beta_{-j})=-1/2$ (or
vice versa). Note that
$\beta_j-\rho$ and $\beta_{-j}+\rho$ are orthogonal roots with sum
$\beta_{-j}+\beta{j}$; therefore they lie in $\Omega$.
By Chevalley commutator's formula we have
$x_{\rho}(\xi) = [x_{\beta_{-j}+\rho}(\xi),x_{-\beta_{-j}}(\pm 1)]$.
Thus we reduce the question to two previous cases, since
$\angle(\beta_{-j}+\rho,\Omega)=0$ and $\angle(-\beta_{-j},\Omega)=\pi$.
\item
Suppose that $(\rho,\Omega) = 2\pi/3$. This means that for every $i$
one of the scalar products $(\rho,\b_i)$, $(\rho,\b_{-i})$ equals $0$,
while the other equals $-1/2$.
\begin{itemize}
\item {\bf The $\pi/2$-equation.}
Note that $(\b_i,\rho)\leq 0$ for every $i$. Hence $w_{\b_i}=v_{\b_i}$
for every $i$ and we obtain
$f^{\pi/2}_{\a,\b}(w) = f^{\pi/2}_{\a,\b}(v)$.
\item {\bf The $2\pi/3$-equation.}
As above, we have $w_{\b_i}=v_{\b_i}$ for every $i$.

If $(\b_1,\rho)=-1/2$, then $w_{\b_1-\b_i} = v_{\b_1-\b_i}$ for every
$i\neq\pm 1$, and $\la\b_{-1},\rho\ra=0$, so that
$\sum_{s=1}^l\la\b_{-1},\a_s\ra\hat{w}_s
= \sum_{s=1}^l\la\b_{-1},\a_s\ra\hat{v}_s$. It follows that
$f^{2\pi/3}_{\a,\b}(w) = f^{2\pi/3}_{\a,\b}(v)$.

Now we assume that $(\b_1,\rho)=0$ and $(\b_{-1},\rho)=-1/2$.
Then $w_{\b_1-\b_i}=v_{\b_1-\b_i}$ whenever $(\b_i,\rho)=0$.
We obtain
\begin{align*}
f^{2\pi/3}_{\a,\b}(w) - f^{2\pi/3}_{\a,\b}(v) &=
\sum_{i\colon (\b_i,\rho)=-1/2}N_{\b_1,-\b_i} \xi
N_{\rho,\b_1-\b_i-\rho}v_{\b_1-\b_i-\rho}v_{\b_i}\\
&+\xi v_{\b_1}v_{-\rho}
\end{align*}
Note that exactly half of $2k-2$ indices $i=2,\dots,-2$ satisfy the
condition $(\b_i,\rho)=-1/2$, and for each one of them we have
$(\b_1-\b_i-\rho)+\b_i = \b_1-\rho$. Hence the roots
$\b_1,-\rho,\{(\b_1-\b_i-\rho),\b_i\}_{i\colon (\b_i,\rho)=-1/2}$ form
a maximal square.
Therefore
$$
f^{2\pi/3}_{\a,\b}(w) - f^{2\pi/3}_{\a,\b}(v) =
\xi f^{\pi/2}_{\b_1,-\rho}(v).
$$
\item {\bf The $\pi$-equation.}
As above, note that for exactly half of the indices $i=1,\dots,-1$
we have $(\b_i,\rho)=-1/2$, and for the other half we have
$(\b_i,\rho)=0$.
Put $J=\{i\mid (\b_i,\rho)=-1/2\}$, $K=\{i\mid (\b_i,\rho)=0\}$.
We know that $i\in J$ if and only if $-i\in K$.
Again, we have $w_{\b_i}=v_{\b_i}$ for all $i$,
and $w_{-\b_i}=v_{-\b_i}$ for $i\in K$.

First suppose that $1\in J$. Then
$w_{\b_1-\b_i}=v_{\b_1-\b_i}$ for all $i$, and
$w_{\b_i-\b_1}=v_{\b_i-\b_1}$ for $i\in J$.
In this case we have
\begin{align*}
f^{\pi}_{\a,\b}(w) - f^{\pi}_{\a,\b}(v) &=
\sum_{i\in K\setminus\{-1\}}v_{\b_1-\b_i}\xi N_{\rho,\b_i-\b_1-\rho}v_{\b_i-\b_1-\rho}\\
&- \sum_{i\in J\setminus\{1\}}\xi N_{\rho,-\b_i-\rho}v_{-\b_i-\rho}v_{\b_i}\\
&+ \xi v_{-\rho}\cdot\sum_{s=1}^l\la\b_{-1},\a_s\ra\hat{v}_s
\end{align*}
It is easy to see that the roots $\b_{-1}$, $-\rho$,
$\{\b_i-\b_1-\rho,\b_{-i}\}_{i\in K\setminus\{-1\}}$ form a maximal square.
It follow that
$$
f^{\pi}_{\a,\b}(w) - f^{\pi}_{\a,\b}(v) =
- \xi f^{2\pi/3}_{-\rho,\b_{-1}}(v).
$$

Finally, suppose that $1\in K$. Then
$w_{\b_1-\b_i} = v_{\b_1-\b_i}$ for $i\in K$, and
$w_{\b_i-\b_1} = v_{\b_i-\b_1}$ for all $i$.
Similarly,
\begin{align*}
f^{\pi}_{\a,\b}(w) - f^{\pi}_{\a,\b}(v) &=
\sum_{i\in J\setminus\{-1\}}\xi N_{\rho,\b_1-\b_i-\rho}v_{\b_1-\b_i-\rho} v_{\b_i-\b_1}\\
&- \sum_{i\in J\setminus\{-1\}}\xi N_{\rho,-\b_i-\rho}v_{-\b_i-\rho}v_{\b_i}\\
&+ \sum_{s=1}^l\la\b_1,\a_s\ra\hat{v}_s\cdot \xi v_{-\rho}
\end{align*}
It is easy to see that the roots $\b_1$, $-\rho$,
$\{\b_1-\b_i-\rho,\b_i\}_{i\in J\setminus\{-1\}}$ form a maximal
square.
It follows that
$$
f^{\pi}_{\a,\b}(w) - f^{\pi}_{\a,\b}(v) =
- \xi f^{2\pi/3}_{-\rho,\b_1}(v).
$$
\end{itemize}
\item
Suppose that $(\rho,\Omega) = \pi/3$.
This means that $(\rho,\beta_1)=0$ and $(\rho,\beta_{-1}) =
\pi/3$ (or vice versa). Then $\rho-\beta_{-1}$ is a root and by
Chevalley commutator's formula we have
$x_{\rho}(\xi) = [x_{\rho-\beta_{-1}}(\xi),x_{\beta_{-1}}(\pm 1)]$.
Thus we reduce the problem to previously discussed cases, since
$\angle(\rho-\beta_{-1},\Omega)=2\pi/3$ and
$\angle(\beta_{-1},\Omega)=0$.

\end{enumerate}

\bibliography{adjointeq}

\end{document}